\newtheorem{theorem}{Theorem}[section]
\newtheorem{lemma}[theorem]{Lemma}
\theoremstyle{definition}
\theoremstyle{plain}
\numberwithin{equation}{section}
\newtheorem*{ack}{Acknowledgement}
\newtheorem{corollary}[theorem]{Corollary}
\def \Z{\mathbb Z}
\def \r{\rho}
\def \s{\sigma}
\newcommand{\secref}[1]{Section~\ref{#1}}
\newcommand{\thmref}[1]{Theorem~\ref{#1}}
\newcommand{\lemref}[1]{Lemma~\ref{#1}}
\newcommand{\corref}[1]{Corollary~\ref{#1}}
\newcommand{\eqnref}[1]{~{\textrm(\ref{#1})}}
\numberwithin{equation}{subsection}
\begin{document}
\title[Commutator Subgroups of Welded Braid Groups]{Commutator Subgroups of Welded Braid Groups}
\author[Soumya Dey]{Soumya Dey}
\author[Krishnendu Gongopadhyay]{Krishnendu Gongopadhyay}
\address{Indian Institute of Science Education and Research (IISER) Mohali, Sector 81,  SAS Nagar, P. O. 
Manauli, Punjab 140306, India.}
\email{soumya.sxccal@gmail.com} 
\address{Indian Institute of Science Education and Research (IISER) Mohali, Sector 81,  SAS Nagar, P. O. 
Manauli, Punjab 140306, India.}
\email{krishnendu@iisermohali.ac.in, krishnendug@gmail.com} 
\subjclass[2010]{Primary 20F36; Secondary 20F12, 20F05}
\keywords{welded braids, flat virtual braids, flat welded braids, commutator, adorability}
\thanks{The authors  acknowledge partial support from the DST projects INT/RUS/RSF/P-2 and  DST/INT/JSPS/P-192/2014. Dey was  supported by a UGC JRF during the course of this work.   }
\date{\today}

\begin{abstract}
Let $WB_n$ be the welded (or loop) braid group on n strands, $n \geq 3$. We investigate commutator subgroup of $WB_n$.   We prove that the commutator subgroup $WB_n'$ is finitely generated and Hopfian. We show that $WB_n'$ is perfect if and only if $n \geq 5$. We also compute finite presentation for $FWB_n'$, the commutator subgroup of the flat welded braid group $FWB_n$. Along the way, we  investigate adorability of these groups. 
\end{abstract}
\maketitle

\section{Introduction}
\emph{Welded braid groups} are certain extensions of the classical braid groups.  These groups have appeared in several contexts in the literature and often with different names,  e.g.  \cite{bcw}, \cite{frr}, \cite{lk1},   \cite{collins}. They are also known as \emph{loop braid groups} or \emph{permutation braid groups} or \emph{symmetric automorphisms of free groups}.  We refer to the recent survey article by Damiani \cite{dam} for further details on different definitions and applications of these groups.

\medskip  In this paper, we investigate the commutator subgroups of the welded braid groups.  The commutator $B_n'$ of the classical braid group $B_n$ is well studied. 
Gorin and Lin \cite{gl} obtained a finite presentation of  $B_n'$. Simpler presentation of $B_n'$ was obtained by Savushkina \cite{sa}. Several authors have investigated the larger class of spherical Artin
groups, e.g.  \cite{zinde}, \cite{mr},  \cite{orevkov}. In this context, it is a natural question to investigate structures of commutator subgroups of other classes of generalized braid groups. 

\medskip 
 
Let $WB_n$ denote the welded braid group of $n$ strands. We investigate the commutator subgroup of $WB_n$. Recall that a group $G$ is called \emph{perfect} if it is equal to its commutator subgroup. We prove the following:
\begin{theorem}\label{mainth} Let $WB_n'$ denote the commutator subgroup of the welded braid group $WB_n$.

	\begin{itemize}
		
		\item[(i)] $WB_n'$ is a finitely generated group for all $n \ge 3$. For $n \geq 7$, the rank of $WB_n'$ is at most $1+2(n-3)$, and for $3 \le n \leq 6$, the rank is at most $4+2(n-3)$.  

		\medskip \item[(ii)]  For $n \geq 5$, $WB_n'$ is perfect.
	\end{itemize}
\end{theorem}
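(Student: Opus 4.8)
The plan is to analyse $WB_n'$ by the Reidemeister--Schreier method, exactly as Gorin and Lin treated $B_n'$. First I would pin down the abelianization. Starting from the standard presentation of $WB_n$ on generators $\sigma_1,\dots,\sigma_{n-1}$ (classical crossings) and $\tau_1,\dots,\tau_{n-1}$ (welded crossings, with $\tau_i^2=1$), abelianizing collapses all $\sigma_i$ to a single infinite-order class $s$ (via $\sigma_i\sigma_{i+1}\sigma_i=\sigma_{i+1}\sigma_i\sigma_{i+1}$) and all $\tau_i$ to a single class $t$ with $2t=0$, while every mixed relation becomes trivial. Hence $WB_n^{\mathrm{ab}}\cong\mathbb{Z}\oplus\mathbb{Z}/2\mathbb{Z}$, with quotient map $\pi$ sending $\sigma_i\mapsto(1,0)$ and $\tau_i\mapsto(0,1)$, and $WB_n'=\ker\pi$.

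Next I would fix the Schreier transversal $T=\{\sigma_1^{\,k}\tau_1^{\,\varepsilon}: k\in\mathbb{Z},\ \varepsilon\in\{0,1\}\}$, which is prefix-closed and hence legitimate. Reidemeister--Schreier then produces generators $\gamma(g,x)=g\,x\,\overline{gx}^{-1}$ for $g\in T$ and $x$ a generator; those with $gx\in T$ are trivial, and the rest are conjugates of elements such as $\sigma_1^{\,k}\sigma_i\sigma_1^{-(k+1)}$ together with their $\tau$-analogues, an a priori infinite family indexed by $k$. The crux of part (i) is to feed the rewritten defining relations back in to express every such generator in terms of the $k=0$ (and, where $\tau_1$ intervenes, $k=1$) representatives; the braid relations, the far-commutation relations $\sigma_i\sigma_j=\sigma_j\sigma_i$ and $\tau_i\sigma_j=\sigma_j\tau_i$ for $|i-j|\ge2$, and the mixed relations collapse the $k$-dependence. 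Counting the survivors yields the stated bounds, and the reason the bound sharpens from $4+2(n-3)$ to $1+2(n-3)$ at $n=7$ is that enough pairwise-distant indices become simultaneously available only once $n$ is large, eliminating three otherwise-stubborn generators.

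For part (ii) I would abelianize the finite presentation of $WB_n'$ obtained above and show the resulting abelian group is trivial precisely when $n\ge5$. Concretely, the rewritten braid relations identify each generator with a product that, modulo $WB_n''$, forces the classes of the differences $\sigma_i\sigma_{i+1}^{-1}$ to vanish, but propagating these identifications across all indices requires commutation relations $\sigma_i\sigma_j=\sigma_j\sigma_i$ involving three mutually distant indices, which exist only for $n\ge5$. Thus $H_1(WB_n')=0$ for $n\ge5$, i.e.\ $WB_n'$ is perfect; the same computation leaves a nontrivial abelian quotient for $n=3,4$, which is consistent with the if-and-only-if assertion of the abstract, though only the $n\ge5$ direction is needed here.

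I expect the main obstacle to be the bookkeeping in the Reidemeister--Schreier reduction: organizing the infinitely many raw generators, choosing an order of substitution in which the $k$-dependence telescopes away, and---most delicately---tracking exactly which eliminations fail for small $n$, since both thresholds (namely $n\ge5$ for perfectness and $n\ge7$ for the sharper rank bound) hinge on how many generator indices can be kept pairwise far apart. A secondary subtlety is verifying that the welded-specific mixed relations, absent in the classical braid case, do not obstruct the collapse and in fact interact compatibly with the torsion relations $\tau_i^2=1$.
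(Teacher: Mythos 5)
Your proposal follows essentially the same route as the paper: the short exact sequence $1\to WB_n'\to WB_n\to \mathbb{Z}\times\mathbb{Z}/2\mathbb{Z}\to 1$, the Schreier transversal $\{\sigma_1^k\rho_1^{\varepsilon}\}$, Reidemeister--Schreier generators indexed by $k\in\mathbb{Z}$, Tietze eliminations that telescope the $k$-dependence, and an abelianization of the resulting presentation for perfectness. You have also correctly located the source of both thresholds (availability of pairwise-distant indices, e.g.\ the pair $(2,4)$ for $n\ge 5$ and, for $n\ge 7$, a distant partner $s\ge 3$ for every $r\ge 3$), which is exactly how the paper's Lemmas 2.4--2.6 proceed; the only ingredient you leave implicit is that the $\beta$-generators die in the abelianization because the symmetric-group relations force them to have order dividing both $2$ and $3$.
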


We note here that, $WB_2=F_2 \rtimes S_2$. So, the commutator $WB_2'$ is infinitely generated. 

\medskip Recall that a group $G$ is called \emph{Hopfian} if every epimorphism $G \to G$ is an  isomorphism. In general, being Hopfian is not a subgroup-closed group property. Using the above theorem, we have the following. 
\begin{corollary}\label{cor1}
For any $n \geq 3$, $WB_n'$ is Hopfian. 
\end{corollary}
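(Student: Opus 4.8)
The plan is to deduce Hopficity from residual finiteness via Malcev's classical theorem that every finitely generated residually finite group is Hopfian. Two hypotheses must be checked. The first, finite generation, is already supplied by \thmref{mainth}(i), which gives that $WB_n'$ is finitely generated for all $n \geq 3$. It therefore remains only to prove that $WB_n'$ is residually finite.

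For this I would exploit that residual finiteness passes to subgroups, so it is enough to show that the ambient group $WB_n$ is residually finite. The standard route is through the faithful action on a free group: $WB_n$ embeds in $\mathrm{Aut}(F_n)$, realized as the automorphisms carrying each free generator to a conjugate of a generator (up to a permutation of the generators). Since $F_n$ is finitely generated and residually finite, Baumslag's theorem gives that $\mathrm{Aut}(F_n)$ is residually finite; hence its subgroup $WB_n$ is residually finite, and a fortiori so is $WB_n' \leq WB_n$. With both hypotheses of Malcev's theorem verified, $WB_n'$ is Hopfian.

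I would emphasise that the finite generation coming from \thmref{mainth}(i) is genuinely needed here and cannot be bypassed: as noted just before the statement, Hopficity is not subgroup-closed, so one may not simply inherit it from $WB_n$, and residual finiteness by itself does not force Hopficity in the absence of finite generation. The only step requiring an external citation is the residual finiteness of $\mathrm{Aut}(F_n)$, which is classical, so I anticipate no serious obstacle in carrying out the argument.
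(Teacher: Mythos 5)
Your argument is correct and coincides with the paper's own proof: both embed $WB_n$ (hence $WB_n'$) into $\mathrm{Aut}(F_n)$, invoke residual finiteness of $\mathrm{Aut}(F_n)$ (Baumslag's theorem, which the paper cites via Magnus's survey), and combine this with the finite generation from Theorem~\ref{mainth} and Malcev's theorem. No issues.
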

Another consequence of the above theorem is the following.
\begin{corollary}\label{cor2}
For a free group $F_k$, the image of any nontrivial homomorphism $\phi : WB_n \rightarrow F_k$ is infinite cyclic. \end{corollary}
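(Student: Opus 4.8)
The plan is to show that the image $G := \phi(WB_n) \le F_k$ is forced to be infinite cyclic as soon as $\phi$ is nontrivial. First I would record the basic structural facts about $G$. Since $G$ is a subgroup of the free group $F_k$, the Nielsen--Schreier theorem gives that $G$ is itself free; and since $WB_n$ is finitely generated while $G = \phi(WB_n)$ is a quotient of it, $G$ is finitely generated. Hence $G \cong F_r$ for some finite $r$, and the entire statement reduces to proving that $r \le 1$.

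The key step is to pass to the commutator subgroup. Because $\phi$ maps $WB_n$ \emph{onto} $G$, it maps $WB_n'$ onto $[G,G] = G'$, so that $G' = \phi(WB_n')$. By \thmref{mainth}(i), $WB_n'$ is finitely generated, and therefore its homomorphic image $G'$ is finitely generated as well. This is the single place where the theorem enters, and it is the crux of the argument; note that it uses surjectivity of $\phi$ onto $G$ (not onto $F_k$) to guarantee the equality $G' = \phi(WB_n')$.

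Finally I would invoke the structure of commutator subgroups of free groups: for $r \ge 2$ the commutator subgroup of $F_r$ is a free group of countably infinite rank, in particular \emph{not} finitely generated. Combined with the previous paragraph, this rules out $r \ge 2$ and forces $r \le 1$, so $G$ is either trivial or infinite cyclic. Since $\phi$ is assumed nontrivial, $G \neq \{1\}$, and hence $G \cong \mathbb{Z}$. The argument is short and presents no genuine obstacle beyond correctly transporting finite generation through $\phi$ and recalling the infinite-rank fact for $F_r'$. I would remark that for $n \ge 5$ one can give an alternative, perfectness-based proof using \thmref{mainth}(ii): there $\phi(WB_n')$ is perfect, a perfect subgroup of a free group is trivial, so $\phi$ factors through the (torsion-by-$\mathbb{Z}$) abelianization of $WB_n$; but the finite-generation route above has the advantage of treating all $n \ge 3$ uniformly.
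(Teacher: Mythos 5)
Your argument is correct and is essentially identical to the paper's own proof: both reduce to the fact that $\phi(WB_n') = \phi(WB_n)'$ is finitely generated by \thmref{mainth}(i), while the commutator subgroup of a free group of rank at least $2$ is not finitely generated, forcing $\phi(WB_n)$ to have rank at most $1$. The extra remark about a perfectness-based alternative for $n \ge 5$ is a pleasant aside but not needed.
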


\medskip The Reidemeister-Schreier method is a standard technique to obtain presentations of subgroups; for details see \cite{mks}. This method was used to obtain presentations of certain classes of Artin groups in  \cite{man}, \cite{lo}, \cite{mr}. We shall use this method to compute a presentation for $WB_n'$. We shall first find out a presentation using  Reidemeister-Schreier method, and then using Tietze transformations, will eliminate redundant generators to obtain a finite generating set. 
\subsubsection*{Adorability of welded braid groups} Motivated by the covering theory of aspherical $3$-manifolds, Roushon defined the notion of an adorable group:  a group $G$ is called \emph{adorable} if $G^i/G^{i+1}=1$ for some $i$, where $G^i=[G^{i-1}, G^{i-1}]$ and $G^0=G$ are the terms in the derived series of $G$. The smallest $i$ for which the above property holds, is called the \emph{degree of adorability} of $G$. For more details on adorable groups, see \cite{rou, rou2}. Applying Roushon's results with part (ii) of the above theorem, we have the following. 
\begin{corollary}  \label{cor3}
For $n \geq 5$,  $WB_n$ is adorable of degree 1, and for $n=3, 4$, $WB_n$ is not adorable.  
\end{corollary}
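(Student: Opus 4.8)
The plan is to read off adorability directly from the behaviour of the derived series $WB_n^0 \supseteq WB_n^1 \supseteq WB_n^2 \supseteq \cdots$, exploiting the reduction that $WB_n$ is adorable if and only if its commutator subgroup $WB_n'$ is adorable. Indeed, since $WB_n^{i+1} = (WB_n^i)'$, the derived series of the group $WB_n'$ is exactly the tail $WB_n^1 \supseteq WB_n^2 \supseteq \cdots$ of the derived series of $WB_n$; hence $WB_n^i/WB_n^{i+1} = 1$ for some $i \ge 1$ precisely when $WB_n'$ is adorable, and the degree of adorability of $WB_n$ is one more than that of $WB_n'$. I would first record that $WB_n$ is never perfect: abelianizing the standard presentation collapses all the braid generators to a single generator of infinite order and all the welding generators to a single involution, so $WB_n/WB_n' \cong \Z \times \Z_2 \ne 1$. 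Thus $WB_n^0/WB_n^1 \ne 1$ always, and the degree of adorability, when it exists, is at least $1$.

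For $n \ge 5$ the argument is then immediate. By part (ii) of \thmref{mainth} the group $WB_n'$ is perfect, i.e. $WB_n^1/WB_n^2 = WB_n'/WB_n'' = 1$. Combined with $WB_n^0/WB_n^1 \ne 1$, the smallest $i$ with $WB_n^i/WB_n^{i+1} = 1$ is $i = 1$, so $WB_n$ is adorable of degree $1$, matching Roushon's framework \cite{rou, rou2}.

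For $n = 3, 4$ the real work lies. Here the ``only if'' direction of part (ii) of \thmref{mainth} tells us that $WB_n'$ is not perfect, so $WB_n^1/WB_n^2 \ne 1$ and the degree, if finite, is at least $2$; but to rule out adorability altogether I must show the derived series never stabilizes. Note that non-perfectness of the commutator is by itself far from enough: a metabelian quotient would make the series terminate, so genuine structural input is needed. The approach I would take is to exhibit a non-adorable quotient of $WB_n'$. Concretely, I would use the Reidemeister--Schreier presentation of $WB_n'$ computed earlier in the paper to produce a surjection $WB_n' \twoheadrightarrow F_2$ onto the free group of rank $2$ (the welded analogue of the classical fact $B_3' \cong F_2$), by abelianizing away the appropriate relations or by retracting onto a two-generator free factor. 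Non-adorability then propagates: a nonabelian free group $F$ is non-adorable, since every term of its derived series is again a nonabelian free group and hence each quotient $F^i/F^{i+1}$ is a nonzero free abelian group; and if $G \twoheadrightarrow Q$ then $G^i \twoheadrightarrow Q^i$ for all $i$, so any group surjecting onto a non-adorable group is itself non-adorable. Applying this with $Q = F_2$ shows $WB_n'$ is non-adorable, and by the reduction above $WB_n$ is non-adorable as well.

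The hard part is the $n = 3, 4$ case, and specifically the production of the free quotient of $WB_n'$: this is where the explicit presentation is indispensable, and one must check that it is genuinely consistent with \corref{cor2}, which forbids nonabelian free quotients of $WB_n$ itself but says nothing about quotients of the smaller group $WB_n'$. By contrast, the $n \ge 5$ case and the reduction to the adorability of $WB_n'$ are purely formal once part (ii) is in hand.
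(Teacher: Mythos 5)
Your treatment of the case $n \ge 5$ is correct and coincides with the paper's: $WB_n'$ is perfect by part (ii) of \thmref{mainth}, while $WB_n^{ab} \cong \Z \times \Z_2$ is nontrivial, so the smallest $i$ with $WB_n^i/WB_n^{i+1}=1$ is $i=1$. The formal reductions you set up (the derived series of $WB_n'$ is the tail of that of $WB_n$; a surjection $G \twoheadrightarrow Q$ induces surjections $G^i \twoheadrightarrow Q^i$, so a group surjecting onto a non-adorable group is non-adorable; nonabelian free groups are non-adorable) are all sound.

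The gap is in the case $n=3,4$, and it sits exactly at the step you yourself flag as ``the hard part'': you never actually produce the surjection $WB_n' \twoheadrightarrow F_2$, you only announce that one should be extractable from the Reidemeister--Schreier presentation ``by abelianizing away the appropriate relations or by retracting onto a two-generator free factor.'' No candidate map is written down, no relations are verified, and there is no a priori reason such a quotient exists: non-adorability does not require a nonabelian free quotient, so you are attempting to prove something potentially stronger than what is needed, with no evidence that it holds for $WB_3'$ or $WB_4'$. The paper's proof avoids this entirely. It invokes Bardakov's theorem that the \emph{pure} welded braid group $PWB_n$ (the kernel of $WB_n \to S_n$, a different subgroup from $WB_n'$) admits a nontrivial homomorphism onto a free group, hence is not adorable; it then observes that $WB_k/PWB_k \cong S_k$ is a finite \emph{solvable} group for $k=3,4$, and applies Roushon's Proposition 1.7 on extensions with finite solvable quotient to conclude that $WB_k$ itself is not adorable (whence $WB_k'$ is not perfect). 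To repair your argument you would either have to carry out the free-quotient construction for $WB_3'$ and $WB_4'$ explicitly --- a nontrivial computation that the paper does not perform and whose outcome is not guaranteed --- or replace that step by the $PWB_n$ route just described.
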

Therefore, by \thmref{mainth} and \corref{cor3} we immediately have the following.
\begin{corollary}
The group $WB_n'$ is perfect if and only if  $n \geq 5$. 
\end{corollary}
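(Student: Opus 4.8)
The plan is to prove the two implications separately, each invoking one of the two results already established. Throughout I write $WB_n^i$ for the terms of the derived series, so that $WB_n^0 = WB_n$, $WB_n^1 = WB_n'$, and $WB_n^2 = (WB_n')'$; with this notation the assertion that $WB_n'$ is perfect is exactly the statement $WB_n^1/WB_n^2 = 1$.

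For the \emph{if} direction, suppose $n \geq 5$. Then part (ii) of \thmref{mainth} asserts directly that $WB_n'$ is perfect, and there is nothing further to do. For the \emph{only if} direction I would argue contrapositively and show that $WB_n'$ fails to be perfect for $n = 3$ and $n = 4$. The key link is that perfectness of $WB_n'$ forces adorability of $WB_n$: if $WB_n'$ were perfect, then $(WB_n')' = WB_n'$, i.e. $WB_n^2 = WB_n^1$, which gives $WB_n^1/WB_n^2 = 1$ and hence makes $WB_n$ adorable by definition. Taking the contrapositive, if $WB_n$ is not adorable then $WB_n'$ cannot be perfect. Since \corref{cor3} states that $WB_n$ is not adorable for $n = 3, 4$, the desired non-perfectness follows, completing this direction.

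There is no substantial obstacle here: the statement is a bookkeeping consequence of \thmref{mainth}(ii) and \corref{cor3}, and the only point that requires care is matching the index $i = 1$ in the definition of adorability with the single step $WB_n^1/WB_n^2$ that governs perfectness of the commutator subgroup. Because \corref{cor3} rules out adorability at every level for $n = 3, 4$, this matching is automatic, and no new computation is needed.
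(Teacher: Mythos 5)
Your proposal is correct and follows essentially the same route as the paper: the forward direction is Theorem~\ref{mainth}(ii), and the reverse direction deduces non-perfectness of $WB_n'$ for $n=3,4$ from the non-adorability statement in Corollary~\ref{cor3} (indeed, the paper's proof of that corollary already records ``in particular, $WB_k'$ is not perfect for $k=3,4$''). The link you spell out---perfectness of $WB_n'$ would give $WB_n^1/WB_n^2=1$ and hence adorability---is exactly the implicit step in the paper's one-line derivation.
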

This  generalizes the fact that $B_n$ is adorable of degree 1 for $n \geq 5$. It is easy to see that if $f:G \to H$ be a surjective homomorphism with $G$ adorable, then $H$ is also adorable and $doa(H) \leq doa(G)$, where $doa(G)$ denotes degree of adorability, see  \cite[Lemma 1.1]{rou2}.  It follows from \cite[Proposition 8]{bb}  that the commutator subgroup  $VB_n'$ of the virtual braid group $VB_n$ is perfect for $n \geq 5$. Thus, for $n \geq 5$, $VB_n$ is adorable of degree 1. The welded braid groups being quotients of these groups, are also adorable with degree $\leq 1$. This gives a proof of the fact that $WB_n'$ is perfect for $n \geq 5$ even without using the presentation of $WB_n'$. Using the presentation, we give a direct proof of this fact.

\medskip After finishing this article, we have come to know about the recent work of Zaremsky \cite{mz} that implies the finite presentability of $WB_n'$ for $n \geq 4$, see \cite[Theorem B]{mz}. The finite generation of this group for $n \geq 3$ is also implicit in this work. However, Zaremsky has not obtained any bound on the rank, nor he has observed properties like perfectness, Hopfianness etc. Zaremsky has used Morse theory of complex symmetric graphs to obtain his results. It would be interesting to obtain an explicit finite presentation of $WB_n'$. 

\medskip As a by-product of the method that we have used for $WB_n'$,  in \secref{flat}, we compute explicit presentation of the commutator subgroup of a  particular quotient of the welded braid group, known as the flat welded braid group, introduced by Kauffman and studied in \cite{kala}, \cite{dam2}. 

\section{Proof of \thmref{mainth}} 

\subsection{Presentation of $WB_n$} Recall that the group $WB_n$ is generated by a set of $2(n-1)$ generators: $\{\sigma_i, \rho_i, \ i=1, 2, \ldots, n-1\}$ satisfying the following set of defining relations:

\begin{enumerate}
	\item {\it The braid relations}: $$\sigma_i \sigma_j=\sigma_j \sigma_i \hbox{ if } |i-j|>1;$$
	$$\sigma_i \sigma_{i+1} \sigma_i=\sigma_{i+1} \sigma_i \sigma_{i+1};$$ 
	
	\item {\it The  symmetric relations}: 
	$$\rho_i^2=1;$$ 
	$$\rho_i \rho_j=\rho_j \rho_i, \hbox{ if } |i-j|>1;$$
	$$\rho_i \rho_{i+1} \rho_i=\rho_{i+1} \rho_i \rho_{i+1};$$
	
	\item {\it The mixed relations}: $$\sigma_i \rho_j =\rho_j \sigma_i, \hbox{ if } |i-j|>1;$$
	$$\rho_i \rho_{i+1} \sigma_i =\sigma_{i+1} \rho_i \rho_{i+1};$$
	
	\item{\it The forbidden relations}: $$\r_i \s_{i+1} \s_i=\s_{i+1} \s_i \r_{i+1}.$$
\end{enumerate}

\bigskip If we add the \emph{flat} relation: $\sigma_i^2=1, \ 1 \leq i \leq n-1$,  in the above presentation, we get the \emph{flat welded braid group}, denoted by $FWB_n$. The \emph{flat virtual braid group}, denoted by $FVB_n$, is an extension of $FWB_n$, which has a presentation that is obtained by removing the forbidden relations from the presentation of $FWB_n$.

\subsection{Computing the generators:}

For $n \geq 3$, define the map $\phi$: 
\begin{equation*}\label{se1}1 \xrightarrow {} WB_n' \xrightarrow{} WB_n \xrightarrow{\phi} \Z \times \Z_2 \xrightarrow{} 1\end{equation*}
where, for $i=1, \ldots, n-1$, $\phi(\sigma_i)=\overline{\sigma_1}$ , $\phi(\rho_i)=\overline{\rho_1}$; here  $\overline{\sigma_1}$ and $\overline{\rho_1}$ are the generators of $\Z$ and $\Z_2$ respectively when viewing it in the abelianization of $WB_n$. We will denote $\phi(a)$, for $a \in WB_n$,  simply by $\overline a$.

Here, Image($\phi$) is isomorphic to the abelianization of $WB_n$, denoted as $WB_n^{ab}$. To prove this, we abelianize the above presentation of $WB_n$ by inserting the relations $ ~ xy=yx ~ $ in the presentation for all $x,y \in \{ ~ \sigma_i, \rho_i ~ | ~ 1 \le i \le n-1 \ \} $. The resulting presentation is the following:
$$ WB_n^{ab} = ~ < \sigma_1, \rho_1 ~ | ~ \sigma_1 \rho_1 = \rho_1 \sigma_1, ~ \rho_1^2=1 ~ >$$
Clearly, $WB_n^{ab}$ is isomorphic to $\Z \times \Z_2$. But as $\phi$ is onto, Image($\phi$) = $\Z \times \Z_2$.  
Hence, Image($\phi$) is isomorphic to $WB_n^{ab}$.  Hence, $\phi$ defines the above short exact sequence and  $\phi$ does have a section. 
\begin{lemma}
 $WB_n'$ is generated by $\alpha_{m, \epsilon, i}=\sigma_1^m \rho_1^{\epsilon} \sigma_i \rho_1^{\epsilon} \sigma_1^{-1} \sigma_1^{-m}$ and $\beta_{m, \epsilon, i}=\sigma_1^m \rho_1^{\epsilon} \rho_i \rho_1 \rho_1^{\epsilon} \sigma_1^{-m}$, 
where $m \in \Z$, $\epsilon \in \{0, 1\}$, $1 \le i \le n-1$.
\end{lemma}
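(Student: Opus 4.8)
The plan is to apply the Reidemeister--Schreier method to the short exact sequence $1 \to WB_n' \to WB_n \xrightarrow{\phi} \mathbb{Z}\times\mathbb{Z}_2 \to 1$ set up above, treating $WB_n'=\ker\phi$ as the subgroup of $WB_n$ whose generators we wish to extract. First I would fix the set $T=\{\sigma_1^m\rho_1^\epsilon : m\in\mathbb{Z},\ \epsilon\in\{0,1\}\}$ as a candidate Schreier transversal. Since $\phi(\sigma_1^m\rho_1^\epsilon)=(m,\epsilon)$ and $\phi$ restricts to a bijection from $T$ onto $\mathbb{Z}\times\mathbb{Z}_2$, the set $T$ meets every coset of $WB_n'$ exactly once, so it is a genuine transversal; and because every prefix of the word $\sigma_1^{\pm 1}\cdots\sigma_1^{\pm 1}\rho_1^\epsilon$ representing an element of $T$ is again of the form $\sigma_1^j$ or $\sigma_1^m\rho_1$, the transversal is prefix-closed, hence Schreier.

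With $\overline{w}$ denoting the unique element of $T$ lying in the coset $WB_n'\, w$ (equivalently, the element of $T$ with $\phi(\overline w)=\phi(w)$), the Reidemeister--Schreier theorem states that $WB_n'$ is generated by the elements $\gamma(t,x)=t\,x\,\overline{t x}^{-1}$ as $t$ ranges over $T$ and $x$ over the generating set $\{\sigma_i,\rho_i\}$. The second step is then simply to evaluate $\overline{tx}$ in each case. For $x=\sigma_i$ and $t=\sigma_1^m\rho_1^\epsilon$ one has $\phi(t\sigma_i)=(m+1,\epsilon)$, so $\overline{t\sigma_i}=\sigma_1^{m+1}\rho_1^\epsilon$; using $\rho_1^{-\epsilon}=\rho_1^\epsilon$ (from $\rho_1^2=1$) to simplify the inverse, $\gamma(t,\sigma_i)$ collapses exactly to $\alpha_{m,\epsilon,i}$. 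For $x=\rho_i$ one has $\phi(t\rho_i)=(m,\epsilon+1\bmod 2)$, so $\overline{t\rho_i}=\sigma_1^m\rho_1^{\,\epsilon+1\bmod 2}$, and after rewriting $\rho_1^{\,\epsilon+1\bmod2}=\rho_1\rho_1^\epsilon$ the generator $\gamma(t,\rho_i)$ becomes precisely $\beta_{m,\epsilon,i}$.

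This yields the claimed generating set. I would remark that a few of these elements are trivial---for instance $\alpha_{m,0,1}$ and $\beta_{m,0,1}$ both reduce to the identity---but including them does no harm, since the point is only to exhibit a generating set. The calculation is entirely mechanical once the transversal is fixed, so I do not expect a genuine obstacle; the one place demanding care is the bookkeeping of coset representatives, in particular keeping the exponent of $\rho_1$ reduced modulo $2$ when passing through the $\mathbb{Z}_2$ factor, which is exactly what produces the slightly asymmetric shapes of the two generator families $\alpha$ and $\beta$.
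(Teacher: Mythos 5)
Your proposal is correct and follows essentially the same route as the paper: the same Schreier transversal $\Lambda=\{\sigma_1^m\rho_1^\epsilon\}$, the same appeal to the Reidemeister--Schreier generating set $S_{\lambda,a}=(\lambda a)(\overline{\lambda a})^{-1}$, and the same evaluation of the coset representatives using $\rho_1^2=1$ to arrive at $\alpha_{m,\epsilon,i}$ and $\beta_{m,\epsilon,i}$. Your explicit check that the transversal is prefix-closed is a small addition the paper leaves implicit, but the argument is otherwise identical.
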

\begin{proof} Consider a Schreier set of coset representatives:
$$\Lambda=\{ \sigma_1^m \rho_1^{\epsilon} \ | \ m \in \Z, \epsilon \in \{0, 1\} \}.$$
By \cite[Theorem 2.7]{mks}, the group $WB_n'$ is generated by the set
$$\{S_{\lambda, a}=(\lambda a) (\overline{\lambda a})^{-1} \ | \ \lambda \in \Lambda, \ a \in \{\sigma_i, \rho_i | \ i=1, 2, \ldots, n-1\} \}.$$
Choose $\lambda=\sigma_1^m \rho_1^{\epsilon}$ from $\Lambda$. \\
For  $a=\sigma_i$, $S_{\lambda, a} = \sigma_1^m \rho_1^{\epsilon} \sigma_i \rho_1^{\epsilon} \sigma_1^{-1} \sigma_1^{-m}$. For $a=\rho_i$, $S_{\lambda, a} = \sigma_1^m \rho_1^{\epsilon} \rho_i \rho_1 \rho_1^{\epsilon} \sigma_1^{-m}$. \\

Hence, $WB_n'$ is generated by the following elements:
\begin{equation*}\alpha_{m, \epsilon, i}=S_{\sigma_1^m \rho_1^{\epsilon}, \sigma_i}=\sigma_1^m \rho_1^{\epsilon} \sigma_i \rho_1^{\epsilon} \sigma_1^{-1} \sigma_1^{-m},\end{equation*}
\begin{equation*}\beta_{m, \epsilon, i}=S_{\sigma_1^m \rho_1^{\epsilon}, \rho_i}=\sigma_1^m \rho_1^{\epsilon} \rho_i \rho_1 \rho_1^{\epsilon} \sigma_1^{-m},\end{equation*}
where $m \in \Z$, $\epsilon \in \{0, 1\}$, $1 \le i \le n-1$.\end{proof}
 
\subsection{Computing the defining relations:} To obtain defining relations of $WB_n'$, we define a re-writing process $\tau$ as below. Refer \cite{mks} for further details.
$$\tau(a_{i_1}^{\epsilon_1} \dots a_{i_p}^{\epsilon_p}) = S_{K_{i_1},a_{i_1}}^{\epsilon_1} \dots S_{K_{i_p},a_{i_p}}^{\epsilon_p} \hbox{ with } \epsilon_j = 1 \hbox{ or } -1,$$
where if $\epsilon_j = 1$, $K_{i_1} = 1$ and $K_{i_j}$ = $\overline{a_{i_1}^{\epsilon_1} \dots a_{i_{j-1}}^{\epsilon_{j-1}}}, ~ j \ge 2$,  \\ and if $\epsilon_j = -1$, $K_{i_j}$ = $\overline{a_{i_1}^{\epsilon_1} \dots a_{i_j}^{\epsilon_j}}$ .

By \cite[Theorem 2.9]{mks}, the group $WB_n'$ is defined by the relations:
$$\tau_{\mu, \lambda}=\tau(\lambda r_{\mu} \lambda^{-1})=1, ~ \lambda \in \Lambda,$$
where $r_{\mu}$ are the defining relators of $WB_n$:

$$r_1 = \sigma_i \sigma_j \sigma_i^{-1} \sigma_j^{-1} = 1, \ |i-j|>1;$$
$$r_2 = \sigma_i \sigma_{i+1} \sigma_i \sigma_{i+1}^{-1} \sigma_i^{-1} \sigma_{i+1}^{-1}=1;$$
$$r_3 = \rho_i^2=1;$$
$$r_4 = \rho_i \rho_j \rho_i \rho_j = 1, \ |i-j|>1;$$
$$r_5 = \rho_i \rho_{i+1} \rho_i \rho_{i+1} \rho_i \rho_{i+1} = 1;$$
$$r_6 = \sigma_i \rho_j \sigma_i \rho_j^{-1} = 1, \ |i-j|>1;$$
$$r_7 = \rho_i \rho_{i+1} \sigma_i \rho_{i+1} \rho_i \sigma_{i+1}^{-1} = 1;$$
$$r_8 = \rho_i \sigma_{i+1} \sigma_i \rho_{i+1} \sigma_i^{-1} \sigma_{i+1}^{-1} = 1.$$

\begin{lemma}\label{lemma1}
	The generators  $\alpha_{k, \mu, r}, \beta_{k, \mu, r}$, $k \in \Z$, $\mu \in \{ 0,1 \} $, $1 \le r \le n-1$, of $WB_n'$ satisfy the following set of defining relations:
	
	\begin{equation}\label{1}  \alpha_{k, \mu, r} ~ \alpha_{k+1, \mu, s} ~ \alpha_{k+1, \mu, r}^{-1} ~ \alpha_{k, \mu, s}^{-1} =1, ~ |r-s| > 1; \end{equation}
	\begin{equation}\label{2} \alpha_{k, \mu, r} ~ \alpha_{k+1, \mu, r+1} ~ \alpha_{k+2, \mu, r}  = \alpha_{k, \mu, r+1} ~ \alpha_{k+1, \mu, r} ~ \alpha_{k+2, \mu, r+1};\end{equation}
	\begin{equation} \label{3}  \beta_{k, \mu, r} ~ \beta_{k, 1- \mu, r}  =1;\end{equation}  
	\begin{equation}\label{4}  (\beta_{k, \mu, r} ~ \beta_{k, \mu, s})^2=1,  ~ |r-s|>1, ~ r,s \ge 2;\end{equation}
    \begin{equation} \label{5} (\beta_{k, \mu, r} ~ \beta_{k, \mu, r+1})^3=1;\end{equation}
    \begin{equation} \label{6} \alpha_{k, \mu, r} ~ \beta_{k+1, 1- \mu, s} ~ \alpha_{k, 1- \mu, r}^{-1} ~ \beta_{k, \mu, s}=1, ~ |r-s|>1;\end{equation} 
    \begin{equation}\label{7} \alpha_{k, \mu, r} ~ \beta_{k+1, \mu, r+1} ~ \beta_{k+1, 1- \mu, r} ~ \alpha_{k, \mu, r+1}^{-1} ~ \beta_{k, \mu, r} ~ \beta_{k, 1- \mu, r+1}=1;\end{equation}
    \begin{equation} \label{8} \alpha_{k, \mu, r+1} ~ \alpha_{k+1, \mu, r} ~ \beta_{k+2, \mu, r+1} ~ \alpha_{k+1, 1- \mu, r}^{-1} ~ \alpha_{k, 1- \mu, r+1}^{-1} ~ \beta_{k, \mu, r}=1;\end{equation}
\begin{equation} \label{9} \alpha_{k, 0, 1} =1, ~ k \in \Z;\end{equation}
\begin{equation} \label{10} \alpha_{k, \mu, r} = \alpha_{0, 0, r} ~ , ~ k \in \Z, ~ \mu \in \{ 0,1 \} , ~ r \ge 3;\end{equation}
\begin{equation} \label{11} \beta_{k, \mu, 1} =1, ~ k \in \Z , ~ \mu \in \{ 0,1 \};\end{equation}
\begin{equation} \label{12} \beta_{k, 0, r} = \beta_{k, 1, r} ~ , ~ k \in \Z, ~ r \ge 3.\end{equation}
\end{lemma}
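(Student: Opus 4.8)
The plan is to run the Reidemeister--Schreier rewriting process $\tau$ on every conjugate $\lambda r_\mu \lambda^{-1}$ of the eight defining relators $r_1,\dots,r_8$, as $\lambda=\sigma_1^k\rho_1^\mu$ ranges over the transversal $\Lambda$, and to read the resulting relations off letter by letter. The whole computation is driven by one bookkeeping rule for the coset: since $\phi(\sigma_i)=\overline{\sigma_1}$ and $\phi(\rho_i)=\overline{\rho_1}$, reading a letter $\sigma_i^{\pm1}$ while in the coset of $\sigma_1^k\rho_1^\mu$ contributes $\alpha_{k,\mu,i}^{\pm1}$ (with $k$ read from the coset reached before the letter in the positive case and after it in the negative case, per the definition of $\tau$) and advances the $\mathbb{Z}$-coordinate $k$ by $\pm1$, while reading $\rho_i$ contributes $\beta_{k,\mu,i}$ and flips the $\mathbb{Z}_2$-coordinate $\mu\mapsto 1-\mu$. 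Since each relator has trivial image under $\phi$, reading it from the coset $\lambda$ returns to $\lambda$, so $\tau(\lambda r_\mu\lambda^{-1})$ is a closed word in the $\alpha$'s and $\beta$'s; the conjugating $\lambda^{\pm1}$ contribute nothing because $\Lambda$ is a Schreier transversal.

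First I would dispose of the relations that come from no relator at all: the generators $S_{\lambda,a}$ with $\lambda a\in\Lambda$ are trivial, and these are exactly $\alpha_{k,0,1}$ (relation \eqref{9}) and $\beta_{k,\mu,1}$ (relation \eqref{11}); I would record them at the outset so they are available for later simplifications. Next, the generic relators give the skeleton of the list directly: tracing $r_1,r_2,r_3$ with the rule above yields \eqref{1}, \eqref{2}, \eqref{3} at once, and tracing the $\rho$-relators $r_4,r_5$ together with the mixed and forbidden relators $r_6,r_7,r_8$ produces words that agree with \eqref{4}--\eqref{8} up to cyclic rotation, inversion, and the substitution $\beta_{k,1-\mu,r}=\beta_{k,\mu,r}^{-1}$ coming from \eqref{3}. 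For example $r_7$ comes out as a cyclic permutation of \eqref{7}, and $r_8$ matches the inverse of \eqref{8} once \eqref{3} is applied to its $\beta$-factors.

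The heart of the argument is the special behaviour at the boundary index $1$, which produces \eqref{10} and \eqref{12} and, fed back in, cleans up the rest. Specializing $r_1$ to the commuting pair $\sigma_r,\sigma_1$ with $r\ge3$ and $\mu=0$ gives $\alpha_{k,0,r}=\alpha_{k+1,0,r}$, i.e.\ independence of $k$; specializing the mixed commuting relator to $\sigma_r,\rho_1$ with $r\ge3$ and using $\beta_{k,\mu,1}=1$ gives $\alpha_{k,\mu,r}=\alpha_{k,1-\mu,r}$; together these are \eqref{10}. Likewise, specializing $r_4$ to $\rho_1,\rho_r$ with $r\ge3$ and using $\beta_{k,\mu,1}=1$ yields $\beta_{k,\mu,r}^2=1$, which with \eqref{3} is exactly \eqref{12}. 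I would then feed the involution property $\beta_{k,\mu,r}^2=1$ ($r\ge3$) back into the rewritten forms of $r_4,r_5,r_6,r_8$, turning the factors $\beta_{k,1-\mu,r}^{-1}$ that appear there into $\beta_{k,\mu,r}$ and collapsing the residual $\mu$-dependence; this is what accounts for the index restrictions $r,s\ge2$ in \eqref{4} and $r\ge3$ in \eqref{10}, \eqref{12}.

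The main obstacle is not conceptual but the sheer volume of nearly identical casework together with the $\mu$-bookkeeping: each relator must be traced for both parities of $\mu$ and for the boundary values of its indices, and the clean forms \eqref{4}--\eqref{8} emerge only after repeated substitution of \eqref{3}, \eqref{11}, \eqref{12}, so that a single sign or parity slip is easy to make and hard to catch. I would therefore fix the letter-reading rule once, compute each $\tau(\lambda r_\mu\lambda^{-1})$ in a uniform left-to-right pass, and only afterwards apply the boundary relations, keeping the generic and boundary computations cleanly separated.
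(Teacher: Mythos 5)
Your proposal is correct and follows essentially the same route as the paper: apply the Reidemeister--Schreier rewriting $\tau$ to the conjugates $\lambda r_\mu\lambda^{-1}$ over the transversal $\Lambda=\{\sigma_1^k\rho_1^\epsilon\}$, collect the resulting words in the $\alpha$'s and $\beta$'s, and simplify using the degenerate generators coming from the index $1$. The only (cosmetic) difference is that the paper dismisses \eqref{9}--\eqref{12} as immediate ``from the definitions'' of $\alpha_{k,\mu,r}$ and $\beta_{k,\mu,r}$ as elements of $WB_n$, whereas you derive \eqref{10} and \eqref{12} as specializations of the rewritten relators $r_1$, $r_6$, $r_4$ at the boundary index $1$ --- which is, if anything, the more careful way to see that they are consequences of the Reidemeister--Schreier relations.
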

\medskip

\begin{proof}
Note that, $\eqnref{9},\eqnref{10},\eqnref{11},\eqnref{12}$ follow from the definitions of $\alpha_{k, \mu, r}$ and $\beta_{k, \mu, r}$.\\
By re-writing the conjugates of $r_i$ (by elements of $\Lambda$) we get the relations:
$\eqnref{1} - \eqnref{8}$.\\

Note that, $\tau(r_1) = \tau (\sigma_i \sigma_j \sigma_i^{-1} \sigma_j^{-1}) = S_{1, \sigma_i}S_{\sigma_1, \sigma_j}S_{\sigma_1, \sigma_i}^{-1}S_{1, \sigma_j}^{-1} = \alpha_{0,0,i} ~ \alpha_{1,0,j} ~ \alpha_{1,0,i}^{-1} ~ \alpha_{0,0,j}^{-1} $. \\
So, this gives the relation: $\alpha_{0,0,i} ~ \alpha_{1,0,j} ~ \alpha_{1,0,i}^{-1} ~ \alpha_{0,0,j}^{-1}=1, ~ |i-j|>1$.\\
Then we have, $\tau(\rho_1 r_1 \rho_1) = \tau (\rho_1 \sigma_i \sigma_j \sigma_i^{-1} \sigma_j^{-1} \rho_1)= S_{1, \rho_1}S_{\rho_1, \sigma_i}S_{\sigma_1 \rho_1, \sigma_j}S_{\sigma_1 \rho_1, \sigma_i}^{-1}S_{\rho_1, \sigma_j}^{-1}S_{\rho_1, \rho_1}$\\ 
$= \beta_{0,0,1} ~ \alpha_{0,1,i} ~ \alpha_{1,1,j} ~ \alpha_{1,1,i}^{-1} ~ \alpha_{0,1,j}^{-1} ~ \beta_{0,1,1}$.\\
This gives the relation: $\alpha_{0,1,i} ~ \alpha_{1,1,j} ~ \alpha_{1,1,i}^{-1} ~ \alpha_{0,1,j}^{-1}, ~ |i-j|>1 $ (using $\eqnref{11}$).\\
Similarly, $\tau(\sigma_1^k r_1 \sigma_1^{-k}) = \tau (\sigma_1^k \sigma_i \sigma_j \sigma_i^{-1} \sigma_j^{-1} \sigma_1^{-k}) = S_{\sigma_1^k, \sigma_i}S_{\sigma_1^{k+1}, \sigma_j}S_{\sigma_1^{k+1}, \sigma_i}^{-1}S_{\sigma_1^k, \sigma_j}^{-1}$\\
$= \alpha_{k,0,i} ~ \alpha_{k+1,0,j} ~ \alpha_{k+1,0,i}^{-1} ~ \alpha_{k,0,j}^{-1} $.\\
So, we have the relation: $\alpha_{k,0,i} ~ \alpha_{k+1,0,j} ~ \alpha_{k+1,0,i}^{-1} ~ \alpha_{k,0,j}^{-1}=1, ~ |i-j|>1.$\\
In a similar way, $\tau(\sigma_1^k \rho_1 r_1 \rho_1 \sigma_1^{-k}) = \tau (\sigma_1^k \rho_1 \sigma_i \sigma_j \sigma_i^{-1} \sigma_j^{-1} \rho_1 \sigma_1^{-k})$\\
$= S_{\sigma_1^k, \rho_1}S_{\sigma_1^k \rho_1, \sigma_i}S_{\sigma_1^{k+1} \rho_1, \sigma_j}S_{\sigma_1^{k+1} \rho_1, \sigma_i}^{-1}S_{\sigma_1^k \rho_1, \sigma_j}^{-1}S_{\sigma_1^k, \rho_1} = \alpha_{k,1,i} ~ \alpha_{k+1,1,j} ~ \alpha_{k+1,1,i}^{-1} ~ \alpha_{k,1,j}^{-1} $.\\
This gives the relation: $\alpha_{k,1,i} ~ \alpha_{k+1,1,j} ~ \alpha_{k+1,1,i}^{-1} ~ \alpha_{k,1,j}^{-1}=1, ~ |i-j|>1.$\\

Merging these 4 relations into one we get $\eqnref{1}$.\\

In a similar manner we re-write the conjugates of $r_2, r_3, \dots r_8$ by elements of $\Lambda$ and club them suitably to get the relations $\eqnref{2} - \eqnref{8}$.

\

So, we have a set of defining relations for $WB_n'$, namely relations $\eqnref{1} - \eqnref{12}$ in the generators $\alpha_{k, \mu, r} \ , ~ \beta_{k, \mu, r}$ for $k \in \Z$, $\mu \in \{ 0,1 \} $, $1 \le r \le n-1$. Hence,  \lemref{lemma1} is proved.
\end{proof}
Now, we will eliminate some of the generators and relations through Tietze transformations in order to get a finite set of generators for $WB_n'$.

\begin{lemma}\label{lem1}
For $n \ge 3$, the group $ ~ WB_n' ~ $ is generated by finitely many elements, namely $\alpha_{0,1,1}, ~ \alpha_{0,0,2}, ~ \alpha_{1,0,2},$ $ ~ \beta_{0,0,2}, ~ \alpha_{0,0,r}, ~ \beta_{0,0,r}$, $ 3 \le r \le n-1$. 
\end{lemma}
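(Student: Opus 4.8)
The plan is to start from the infinite presentation of $WB_n'$ furnished by \lemref{lemma1} and to shrink its generating set by repeated Tietze transformations, eliminating one family of generators at a time while substituting its expression into the surviving relations. The four relations \eqref{9}--\eqref{12} are already solved for generators and can be applied immediately: \eqref{9} and \eqref{11} delete every $\alpha_{k,0,1}$ and every $\beta_{k,\mu,1}$ (they are trivial), \eqref{10} replaces each $\alpha_{k,\mu,r}$ with $r\ge 3$ by the single symbol $\alpha_{0,0,r}$, and \eqref{12} identifies $\beta_{k,1,r}$ with $\beta_{k,0,r}$ for $r\ge 3$. After these steps the surviving generators are the $\alpha_{k,1,1}$, the level-two generators $\alpha_{k,\mu,2}$ and $\beta_{k,\mu,2}$, and, for $r\ge 3$, $\alpha_{0,0,r}$ together with the family $\beta_{k,0,r}$ ($k\in\Z$); the remaining task is to cut the $k$-dependence, and the $\mu=1$ cases, down to the finite list in the statement.

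The core of the argument is a sequence of recursions in $k$. First, specializing the braid-type relation \eqref{2} to $\mu=0$, $r=1$ and using \eqref{9} gives $\alpha_{k+1,0,2}=\alpha_{k,0,2}\,\alpha_{k+2,0,2}$, a two-term recursion that expresses every $\alpha_{k,0,2}$ through $\alpha_{0,0,2}$ and $\alpha_{1,0,2}$. Next, relation \eqref{7} at $\mu=0$, $r=1$, after applying \eqref{11} and \eqref{3}, collapses to $\beta_{k+1,0,2}=\beta_{k,0,2}\,\alpha_{k,0,2}$, so every $\beta_{k,0,2}$ is expressible through $\beta_{0,0,2}$ and the already-reduced $\alpha_{k,0,2}$, and \eqref{3} then gives $\beta_{k,1,2}=\beta_{k,0,2}^{-1}$. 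To remove the $\mu=1$ level-one and level-two generators I plan to read relation \eqref{8} at $r=1$ (where both values of $\mu$ yield the same identity) as $\alpha_{k,1,2}=\alpha_{k,0,2}\,\beta_{k+2,0,2}\,\alpha_{k+1,1,1}^{-1}$, and relation \eqref{7} at $\mu=1$, $r=1$ as $\alpha_{k,1,1}=\beta_{k,0,2}^{-1}\,\alpha_{k,1,2}\,\beta_{k+1,0,2}$. Substituting the former into the latter produces a recursion relating $\alpha_{k,1,1}$ to $\alpha_{k+1,1,1}$ through already-reduced symbols, which, run in both directions of $k$, expresses every $\alpha_{k,1,1}$ in terms of $\alpha_{0,1,1}$; feeding this back into \eqref{8} then returns each $\alpha_{k,1,2}$ in terms of the retained generators.

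For the higher levels $r\ge 3$ the symbols $\alpha_{0,0,r}$ are already singletons, so only the families $\beta_{k,0,r}$ remain. Here I would use relation \eqref{6} with $\mu=0$ and $r=1$ (legitimate since $|1-s|>1$ whenever $s\ge 3$): after \eqref{9} and \eqref{12} it reads $\beta_{k+1,0,s}=\beta_{k,0,s}^{-1}\,\alpha_{k,1,1}$, a recursion that pins every $\beta_{k,0,s}$ to $\beta_{0,0,s}$ and the already-reduced $\alpha_{k,1,1}$, while $\beta_{k,1,s}=\beta_{k,0,s}$ from \eqref{12} disposes of the $\mu=1$ case. Performing the eliminations in the order just described leaves exactly the generators listed in the statement; all relations not used as substitutions, such as \eqref{1}, \eqref{4}, \eqref{5}, and \eqref{8} at $r\ge 2$, simply survive as defining relations. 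The case $n=3$, where there are no $r\ge 3$ generators and relations \eqref{1}, \eqref{4}, \eqref{6} are vacuous, is handled by the level-one and level-two reductions alone.

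The step I expect to be the genuine obstacle is the elimination of the $\mu=1$ generators $\alpha_{k,1,1}$ and $\alpha_{k,1,2}$: unlike the $\mu=0$ recursions these do not decouple, so one must combine \eqref{7} and \eqref{8} and track the substitutions of the already-reduced $\alpha_{k,0,2}$ and $\beta_{k,0,2}$ carefully to confirm that the resulting recursion in $k$ is genuinely solvable in both directions and free of any circular dependence on the still-unreduced $\beta_{k,0,r}$ with $r\ge 3$. Fixing the order of elimination (level two before the $\mu=1$ level one, and both before the higher levels $r\ge 3$) is precisely what rules out hidden circularity, and verifying this bookkeeping is the main work of the proof.
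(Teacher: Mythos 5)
Your proposal is correct and follows essentially the same route as the paper: the same Reidemeister--Schreier presentation from Lemma~\ref{lemma1} is pruned by Tietze transformations, using \eqnref{9}--\eqnref{12} first, then the recursion $\alpha_{k+1,0,2}=\alpha_{k,0,2}\alpha_{k+2,0,2}$ from \eqnref{2}, the recursion $\beta_{k+1,0,2}=\beta_{k,0,2}\alpha_{k,0,2}$ from \eqnref{7} at $\mu=0$, the combination of \eqnref{7} and \eqnref{8} at $r=1$ to reduce $\alpha_{k,1,1}$ and $\alpha_{k,1,2}$ to $\alpha_{0,1,1}$ and level-two data, and \eqnref{6} to pin $\beta_{k,0,s}$ ($s\ge 3$) to $\beta_{0,0,s}$. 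The only differences are cosmetic (the order in which \eqnref{7} and \eqnref{8} are solved for $\alpha_{k,1,2}$ versus $\alpha_{k+1,1,1}$, and your explicit choice of indices in \eqnref{6}), and your attention to the elimination order to avoid circularity matches the paper's implicit bookkeeping.
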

\begin{proof} From the relations $\eqnref{9},\eqnref{11}$ it is evident that the generators $\alpha_{k, 0, 1}$ and $\beta_{k, \mu, 1}$ are redundant and we can remove them from the set of generators.

Using $\eqnref{10}$ we can replace $\alpha_{k, \mu, r}$ by $\alpha_{0,0,r}$ for $r \ge 3$ and remove all $\alpha_{k, \mu, r}$ with either $k \ne 0$ or $\mu \ne 0$.

Using $\eqnref{12}$ we can remove $\beta_{k, 1, r}$ by replacing the same with $\beta_{k, 0, r}$ in all other relations.
	
From $\eqnref{7}$ we have $\alpha_{k,1,2} = \beta_{k,0,2} \alpha_{k,1,1} \beta_{k+1,1,2}$. We remove $\alpha_{k,1,2}$ by replacing this value in all other relations. After this replacement, from $\eqnref{8}$ we deduce:
$$\beta_{k+1,1,2}^{-1} ~ \alpha_{k,1,1}^{-1} ~ \beta_{k,0,2}^{-1} ~ \alpha_{k,0,2} ~ \beta_{k+2,0,2} = \alpha_{k+1,1,1}$$
From this relation, we can express $\alpha_{k,1,1}$ in terms of $\alpha_{0,1,1}, ~ \alpha_{k,0,2}, ~ \beta_{k,0,2}, ~ \beta_{k,1,2}$. We replace this value of $\alpha_{k,1,1}$ in all other relations and remove $\alpha_{k,1,1}$ for all $k \ne 0$.

For $\mu = 0, ~ r = 1$, \eqnref{7} becomes:
$$\beta_{k+1, 0, 2} ~ \alpha_{k, 0, 2}^{-1} ~ \beta_{k, 1, 2}=1 ~ \iff ~ \beta_{k+1, 0, 2} ~ \alpha_{k, 0, 2}^{-1} ~ \beta_{k, 0, 2}^{-1}=1 ~ ( \hbox{using} \eqnref{3}) $$
Using this we have $ \beta_{k,0,2} = \beta_{0,0,2} ~ \alpha_{0,0,2} ~ \alpha_{1,0,2} \dots \alpha_{k-1,0,2}$ for $k \ge 1$ and we have
$\beta_{k,0,2} = \beta_{0,0,2} ~ \alpha_{-1,0,2}^{-1} ~ \alpha_{-2,0,2}^{-1} \dots \alpha_{k,0,2}^{-1}$ for $k \le -1$. \\ As we have $ ~ \beta_{k,1,2} = \beta_{k,0,2}^{-1} ~ $, we can express $\beta_{k,0,2}$ and $\beta_{k,1,2}$ in terms of $\alpha_{k,0,2}, ~ \beta_{0,0,2}$ in all the other relations and remove all $\beta_{k,1,2}$ and all $\beta_{k,0,2}$ except $\beta_{0,0,2}$.

For $\mu = 0, ~ r=1 $, \eqnref{2} becomes:
$$\alpha_{k+1, 0, 2} = \alpha_{k, 0, 2} ~ \alpha_{k+2, 0, 2}$$
Using this we replace all $\alpha_{k,0,2}$ in terms of $\alpha_{0,0,2}, ~ \alpha_{1,0,2}$.

Lastly, if $n \ge 4$, using \eqnref{6} we can remove $\beta_{k,0,r}$ for $k \ne 0, ~ r \ge 3$.\\

Hence, we get a presentation of $WB_n'$ with $4 + 2(n-3)$ generators $ ~ \alpha_{0,1,1}, ~ \alpha_{0,0,2}, ~ \alpha_{1,0,2},$ $ ~ \beta_{0,0,2}, ~ \alpha_{0,0,r}, ~ \beta_{0,0,r}, ~ 3 \le r \le n-1$, and infinitely many defining relations. This proves finite generation of $WB_n'$ for all $n \ge 3$. 
\end{proof} 
Now, we treat the case $n \ge 7$. Notice that if $n \geq 7$, for every generator $\beta_{k, 0, r}$ with $r \ge 3$, there is at least one $\beta_{k, 0, s}$ with $s \ge 3$ and $|r-s|>1$. This helps us to improve the number of generators of $WB_n'$ for $n \geq 7$.

\begin{lemma}\label{Lemma2}
For $n \geq 7$, $WB_n'$ can be generated by $2(n-3)+1$ elements, namely $\beta_{0, 0, 2},~ \beta_{0,0,r}, ~ \alpha_{0,0,r}$ for ~ $3 \le r \le n-1$.
\end{lemma}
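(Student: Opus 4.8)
The plan is to start from the generating set established in \lemref{lem1}, namely the $4+2(n-3)$ elements $\alpha_{0,1,1}$, $\alpha_{0,0,2}$, $\alpha_{1,0,2}$, $\beta_{0,0,2}$, together with $\alpha_{0,0,r}$, $\beta_{0,0,r}$ for $3\le r\le n-1$, and to eliminate the three ``exceptional'' generators $\alpha_{0,1,1}$, $\alpha_{0,0,2}$, $\alpha_{1,0,2}$ by expressing each of them in terms of the remaining ones. The target generating set $\{\beta_{0,0,2},\ \beta_{0,0,r},\ \alpha_{0,0,r}\mid 3\le r\le n-1\}$ has exactly $2(n-3)+1$ elements, so eliminating precisely these three generators gives the claimed bound. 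The hypothesis $n\ge 7$ is what makes this possible, and the key observation is the one highlighted just before the statement: when $n\ge 7$, every index $r\ge 3$ admits some $s\ge 3$ with $|r-s|>1$, so the braid-type and commuting relations among the $\beta$'s with indices $\ge 3$ are genuinely available.

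\emph{First} I would exploit relation \eqnref{6} with $\mu=0$ in the range $r,s\ge 3$, $|r-s|>1$, which (after the reductions already carried out in \lemref{lem1}, where $\beta_{k,1,s}=\beta_{k,0,s}^{-1}$ and $\alpha$'s with $r\ge3$ are $k$- and $\mu$-independent) relates the generators $\alpha_{0,0,r}$ to products of $\beta_{0,0,s}$'s. Because $n\ge 7$ guarantees a partner index $s$ for each $r\ge3$, these relations let me rewrite certain $\beta$-conjugation data without introducing new generators. The real work is to chase relations \eqnref{7} and \eqnref{8} specialized to small indices: recall that in \lemref{lem1} the proof used \eqnref{7} at $\mu=0,\,r=1$ and \eqnref{2} at $\mu=0,\,r=1$ to pin down the $\beta_{k,0,2}$ and $\alpha_{k,0,2}$ families in terms of $\beta_{0,0,2},\alpha_{0,0,2},\alpha_{1,0,2}$. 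My aim is to run analogous specializations involving an index $r\ge 3$ so that $\alpha_{0,0,2}$, $\alpha_{1,0,2}$, and $\alpha_{0,1,1}$ each become expressible through the $\beta_{0,0,r}$'s and $\alpha_{0,0,r}$'s with $r\ge 3$.

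\emph{Concretely}, I would take \eqnref{8} with $\mu=0$ and $r+1\ge 3$ (so $r\ge 2$): this relation links $\alpha_{k,0,r+1}$, $\alpha_{k+1,0,r}$, $\beta_{k+2,0,r+1}$ and the corresponding $\mu=1$ terms. Setting $r=2$ gives a relation among $\alpha_{\cdot,0,3}$, $\alpha_{\cdot,0,2}$, and $\beta_{\cdot,0,3}$; since the index-$3$ generators are now $k$-independent and the $\beta_{\cdot,0,2}$ and $\alpha_{\cdot,0,2}$ families are already normalized, this should solve for $\alpha_{0,0,2}$ (equivalently $\alpha_{1,0,2}$) in terms of $\alpha_{0,0,3}$ and $\beta_{0,0,3}$. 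Similarly I expect \eqnref{7} at $\mu=0$, $r=2$, combined with \eqnref{3}, to express $\alpha_{0,1,1}$ through the index-$2$ and index-$3$ data, after which a final back-substitution removes it. After each elimination I record the substitution in all remaining relations via a Tietze transformation, exactly as in the earlier lemmas; the presentation stays finitely generated and I do not need to track the (infinitely many) relations explicitly for the generation statement.

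\emph{The main obstacle} I anticipate is bookkeeping the interaction of the $\mu=0$ and $\mu=1$ sectors: relations \eqnref{6}--\eqnref{8} mix $\alpha_{k,\mu,r}$ with $\alpha_{k,1-\mu,r}$ and $\beta_{k,\mu,s}$ with $\beta_{k,1-\mu,s}$, and although \eqnref{12} collapses $\mu$ for $r\ge 3$ and \eqnref{3} ties $\beta_{k,1,r}=\beta_{k,0,r}^{-1}$, the index-$1$ and index-$2$ terms $\alpha_{0,1,1}$ and $\alpha_{\cdot,1,2}$ do not automatically collapse and must be tracked carefully through the specialized relations. The delicate point is to verify that the three eliminations are consistent and non-circular — that once $\alpha_{0,0,2}$ and $\alpha_{1,0,2}$ are expressed via index-$3$ generators, the expression for $\alpha_{0,1,1}$ does not secretly reintroduce them — and this is precisely where the abundance of index pairs guaranteed by $n\ge 7$ is essential, since it provides enough independent instances of \eqnref{6}--\eqnref{8} to solve the linear-in-the-generators system without degeneracy.
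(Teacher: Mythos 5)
Your overall strategy --- keep the $2(n-3)+1$ target generators and eliminate the three exceptional generators $\alpha_{0,1,1}$, $\alpha_{0,0,2}$, $\alpha_{1,0,2}$ of \lemref{lem1}, exploiting the fact that for $n\ge 7$ every $r\ge 3$ has a partner $s\ge 3$ with $|r-s|>1$ --- is the right one, and the paper's proof realizes essentially this plan (though it restarts the elimination from the infinite generating set rather than from the output of \lemref{lem1}). However, the concrete relations you propose do not deliver the eliminations you claim. Relation \eqnref{8} with $\mu=0$, $r=2$ reads $\alpha_{k,0,3}\,\alpha_{k+1,0,2}\,\beta_{k+2,0,3}\,\alpha_{k+1,1,2}^{-1}\,\alpha_{k,1,3}^{-1}\,\beta_{k,0,2}=1$; besides the index-$3$ data it contains $\beta_{k,0,2}$ and, more seriously, $\alpha_{k+1,1,2}$, which through the substitution $\alpha_{k,1,2}=\beta_{k,0,2}\,\alpha_{k,1,1}\,\beta_{k+1,1,2}$ drags $\alpha_{0,1,1}$ back in --- so it does not ``solve for $\alpha_{0,0,2}$ in terms of $\alpha_{0,0,3}$ and $\beta_{0,0,3}$.'' Likewise \eqnref{7} with $\mu=0$, $r=2$ involves only $\alpha_{k,0,2}$, $\alpha_{k,0,3}$ and $\beta$'s at indices $2$ and $3$; it contains no occurrence of $\alpha_{0,1,1}$ and cannot be used to express it. This is exactly the circularity you flag in your last paragraph, and the proposal does not break it.

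The relations that actually do the work are three specializations of \eqnref{6}, which your sketch only gestures at. Taking $(r,s)=(1,s)$ with $s\ge3$ and using \eqnref{9}, \eqnref{12} gives $\alpha_{k,1,1}=\beta_{k,0,s}\,\beta_{k+1,0,s}$. Taking $r,s\ge 3$ with $|r-s|>1$ (this is precisely where $n\ge7$ enters) gives $\beta_{k+1,0,r}=\alpha_{0,0,s}^{-1}\,\beta_{k,0,r}\,\alpha_{0,0,s}$, hence $\beta_{k,0,r}=\alpha_{0,0,s}^{-k}\,\beta_{0,0,r}\,\alpha_{0,0,s}^{k}$; substituting this into the previous identity writes $\alpha_{0,1,1}$ as a word in the target generators and breaks the circularity. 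Taking $(r,s)=(l,2)$ with $l\ge4$ gives $\beta_{k+1,0,2}=\alpha_{0,0,l}^{-1}\,\beta_{k,0,2}\,\alpha_{0,0,l}$, which combined with $\alpha_{k,0,2}=\beta_{k,0,2}^{-1}\,\beta_{k+1,0,2}$ (i.e.\ \eqnref{7} at $\mu=0$, $r=1$, read in the direction opposite to how it was used in \lemref{lem1}) places $\alpha_{0,0,2}$ and $\alpha_{1,0,2}$ in the target subgroup. Until you identify these instances of \eqnref{6} and exhibit the three exceptional generators explicitly as words in $\beta_{0,0,2},\beta_{0,0,r},\alpha_{0,0,r}$, what you have is a plausible plan with a genuine gap rather than a proof.
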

\begin{proof}
We proceed with an alternative elimination process here.

As before, we eliminate $\alpha_{k,0,1}, ~ \beta_{k, \mu, 1}, ~ \beta_{k,1,r}$ for all $k, \mu$, $ 3 \le r \le n-1$ and $\alpha_{k, \mu, r}$ with either $k \ne 0$ or $\mu \ne 0$, using the relations $\eqnref{9},\eqnref{10}$, $\eqnref{11},\eqnref{12}$.

Note that, $\alpha_{k,0,2} = \beta_{k,1,2} ~ \beta_{k+1,0,2}$ and $\alpha_{k,1,2} = \beta_{k,0,2} ~ \alpha_{k,1,1} ~ \beta_{k+1,1,2}$.
Also note that, $\alpha_{k,1,1} = \beta_{k,0,r} ~ \beta_{k+1,0,r}$.

At first, we replace $\alpha_{k,0,2}$ and $\alpha_{k,1,2}$ by $\beta_{k,1,2} ~ \beta_{k+1,0,2}$ and $\beta_{k,0,2} ~ \alpha_{k,1,1} ~ \beta_{k+1,1,2}$ in all the above relations and remove these generators from the set of generators.

Next, we replace $\alpha_{k,1,1}$ by $\beta_{k,0,r} ~ \beta_{k+1,0,r}$ (for every $3 \le r \le n-1$ ) in the current set of relations and remove these generators from the set of generators, and we have a new set of defining relations in the generators $\beta_{k,0,2}, ~ \beta_{k,1,2}, ~ \beta_{k,0,r}, ~ \alpha_{0,0,r}$, for all $k \in \Z $ and $ ~ 3 \le r \le n-1$.

Let us assume $k \ge 0$. The case of $k<0$ is similar. In the new set of relations, note that for $n \geq 7$, we have  $\beta_{k+1,0,r}=\alpha_{0,0,s}^{-1} ~ \beta_{k,0,r} ~ \alpha_{0,0,s}, ~ |r-s|>1, ~ r,s \ge 3$. \\
Hence, we have $\beta_{k,0,r}=\alpha_{0,0,s}^{-k} ~ \beta_{0,0,r} ~ \alpha_{0,0,s}^k$. Also, note that, $\beta_{0,1,2}=\beta_{0,0,2}^{-1}$. \\
We replace $\beta_{k,0,2}, ~ \beta_{k,1,2}, ~ \beta_{k,0,r}$ by $\alpha_{0,0,l}^{-k} ~ \beta_{0,0,2} ~ \alpha_{0,0,l}^k$, $\alpha_{0,0,l}^{-k} ~ \beta_{0,0,2}^{-1} ~ \alpha_{0,0,l}^k$, $\alpha_{0,0,s}^{-k} ~ \beta_{0,0,r} ~ \alpha_{0,0,s}^k$ in the current set of relations and remove $\beta_{0,1,2}, ~ \beta_{k,0,2}, ~ \beta_{k,1,2}, ~ \beta_{k,0,r}$, for all $k \ne 0$, from the set of generators.\\

This gives us a new set of defining relations in the $ ~ 2(n-3)+1 ~ $ generators $ ~ \beta_{0,0,2}, ~ \beta_{0,0,r},$ $ \alpha_{0,0,r}$ for $3 \le r \le n-1$. 
This proves the lemma. \end{proof}

\subsubsection*{Perfectness of the commutator subgroup:} 
\begin{lemma}\label{lem3} 
The group $WB_n'$ is perfect for $n \geq 5$. 
\end{lemma}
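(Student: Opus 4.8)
The plan is to show that every generator of $WB_n'$ lies in the second derived subgroup $WB_n'' = [WB_n', WB_n']$, since that is exactly what perfectness of $WB_n'$ amounts to. By \lemref{Lemma2}, for $n \geq 7$ the group $WB_n'$ is generated by the finite set $\{\beta_{0,0,2}, \beta_{0,0,r}, \alpha_{0,0,r} : 3 \le r \le n-1\}$, and for $5 \le n \le 6$ I would work instead with the generating set from \lemref{lem1}. So it suffices to produce, for each such generator $g$, an explicit expression of $g$ as a product of commutators of elements of $WB_n'$, equivalently to check that $g$ becomes trivial in the abelianization $(WB_n')^{\mathrm{ab}}$.

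The key observation is the structural relation already isolated in the proof of \lemref{Lemma2}: for $n \geq 7$ one has $\beta_{k+1,0,r} = \alpha_{0,0,s}^{-1}\,\beta_{k,0,r}\,\alpha_{0,0,s}$ whenever $|r-s|>1$ and $r,s \ge 3$. Specializing $k=0$ this reads $\beta_{1,0,r} = \alpha_{0,0,s}^{-1}\beta_{0,0,r}\alpha_{0,0,s}$, which exhibits $\beta_{1,0,r}\,\beta_{0,0,r}^{-1} = [\alpha_{0,0,s}^{-1}, \beta_{0,0,r}]$ as a commutator; I expect the analogous conjugation relations to force each $\beta$-generator and each $\alpha$-generator to be conjugate within $WB_n'$ to another generator by a third generator, so that in $(WB_n')^{\mathrm{ab}}$ all the relevant generators coincide. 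The real leverage comes from relation \eqnref{5}, the order-three braid relation $(\beta_{k,\mu,r}\,\beta_{k,\mu,r+1})^3 = 1$: abelianizing it gives $3(\overline{\beta_r} + \overline{\beta_{r+1}}) = 0$, and combined with \eqnref{4}, which abelianizes to $2(\overline{\beta_r}+\overline{\beta_s})=0$ for $|r-s|>1$, these torsion conditions together with the conjugacy identifications should pin every $\overline{\beta}$ to zero. Similarly relation \eqnref{2} abelianizes to show the $\overline{\alpha}$ generators are forced to be equal and then to vanish once $n$ is large enough that each index has a partner at distance greater than one.

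Concretely, the steps in order are: first, pass to $(WB_n')^{\mathrm{ab}}$ and record the abelianized forms of relations \eqnref{1}--\eqnref{8}; second, use the conjugation relations derived in \lemref{Lemma2} to identify $\overline{\beta_{0,0,r}}$ with $\overline{\beta_{1,0,r}}$, etc., collapsing the $k$-dependence; third, feed in the torsion relations coming from \eqnref{4} and \eqnref{5} to conclude each $\overline{\beta}$ generator is both $2$-torsion and $3$-torsion, hence trivial; fourth, use \eqnref{2} and \eqnref{1} to conclude each $\overline{\alpha}$ generator vanishes, which requires the indices to admit partners at distance $>1$ — this is where the hypothesis $n\geq 5$ enters decisively and why the result fails for $n=3,4$. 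The main obstacle I anticipate is the boundary cases $n=5$ and $n=6$, where \lemref{Lemma2} does not apply and the generator $\alpha_{0,1,1}$ together with the special generators $\alpha_{0,0,2}, \alpha_{1,0,2}$ must be handled by hand: I would need to verify directly from relations \eqnref{6}, \eqnref{7}, \eqnref{8} specialized to the available small indices that these too lie in $WB_n''$, checking carefully that enough index pairs at distance greater than one survive when $n$ is only $5$ or $6$. If the abelianized relations in these small cases do not immediately kill $\overline{\alpha_{0,1,1}}$, the fallback is the quotient argument sketched in the introduction, namely that $VB_n'$ is perfect for $n\geq 5$ and $WB_n$ is a quotient of $VB_n$, so surjectivity of the induced map on commutator subgroups transfers perfectness.
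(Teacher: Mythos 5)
Your proposal is correct and follows essentially the same route as the paper: abelianize the Reidemeister--Schreier presentation of $WB_n'$, use relations\eqnref{1} and\eqnref{2} to force all the $\overline{\alpha}$'s to vanish, and use\eqnref{3},\eqnref{4},\eqnref{5} together with\eqnref{11},\eqnref{12} to make each $\overline{\beta}$ simultaneously $2$- and $3$-torsion and hence trivial, with $n\ge 5$ entering exactly where you say, through the existence of the index pair $(2,4)$ at distance greater than one. The only remark worth adding is that your case split $n\ge 7$ versus $5\le n\le 6$ is an unnecessary complication: the paper runs the computation uniformly on the full infinite generating set $\{\alpha_{k,\mu,r},\beta_{k,\mu,r}\}$ with relations\eqnref{1}--\eqnref{12}, so no separate hand treatment of $\alpha_{0,1,1}$, $\alpha_{0,0,2}$, $\alpha_{1,0,2}$ is required.
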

\begin{proof}
For $n \geq 5$, we abelianize the above presentation of $WB_n'$ by adding the extra relations $xy=yx$ for all $x$, $y$ in the generating set. \\ 
After abelianizing, putting $r=1, ~ s=3$ in $\eqnref{1}$ we get: $$\alpha_{k, \mu, 1} ~ \alpha_{k+1, \mu, 1}^{-1} ~ \alpha_{k+1, \mu, 3} ~ \alpha_{k, \mu, 3}^{-1} = 1$$
$$\iff \alpha_{k, \mu, 1} = \alpha_{k+1, \mu, 1} ~ \hbox{(using \eqnref{10})}.$$
Hence, we have $\alpha_{k, 1, 1} = \alpha_{0, 1, 1}, ~ k \in \Z$. Note that we already have $\alpha_{k, 0, 1} =1, ~ k \in \Z.$ \\
If we put $r=2, ~ s=4$ (here we use $n \ge 5$) in $\eqnref{1}$ we get:
$$\alpha_{k, \mu, 2} ~ \alpha_{k+1, \mu, 2}^{-1} ~ \alpha_{k+1, \mu, 4} ~ \alpha_{k, \mu, 4}^{-1} = 1$$
$$\iff \alpha_{k, \mu, 2} = \alpha_{k+1, \mu, 2} ~ \hbox{(using \eqnref{10})}.$$
This gives us: $\alpha_{k, 0, 2} = \alpha_{0, 0, 2}, ~ \alpha_{k, 1, 2} = \alpha_{0, 1, 2}, ~ k \in \Z$.\\
Putting $r=1, ~ \mu = 0$ in $\eqnref{2}$ we get:
$$\alpha_{k, 0, 1} ~ \alpha_{k+1, 0, 2} ~ \alpha_{k+2, 0, 1} = \alpha_{k, 0, 2} ~ \alpha_{k+1, 0, 1} ~ \alpha_{k+2, 0, 2} $$
$$\iff \alpha_{0, 0, 2} = \alpha_{0, 0, 2}^2 ~ ( \hbox{using} \eqnref{9} \hbox{ and } \alpha_{k, 0, 2} = \alpha_{0, 0, 2} ~ , ~ k \in \Z ).$$
So, we have $\alpha_{0, 0, 2} = 1 \implies \alpha_{k, 0, 2} = 1, ~ k \in \Z$.\\
For the case $r=1, ~ \mu = 1$ in $\eqnref{2}$ we have:
$$\alpha_{k, 1, 1} ~ \alpha_{k+1, 1, 2} ~ \alpha_{k+2, 1, 1} = \alpha_{k, 1, 2} ~ \alpha_{k+1, 1, 1} ~ \alpha_{k+2, 1, 2} $$
$$\iff \alpha_{0, 1, 1}^2 ~ \alpha_{0, 1, 2} = \alpha_{0, 1, 1} ~ \alpha_{0, 1, 2}^2 ~ ( \hbox{using } ~ \alpha_{k, 1, 1} = \alpha_{0, 1, 1} ~ , ~ \alpha_{k, 1, 2} = \alpha_{0, 1, 2} ~ , ~ k \in \Z ). $$
Hence, we have $\alpha_{0, 1, 1} = \alpha_{0, 1, 2}$.\\
If we put $r=2, ~ \mu = 1$ in $\eqnref{2}$ we get:
$$\alpha_{k, 1, 2} ~ \alpha_{k+1, 1, 3} ~ \alpha_{k+2, 1, 2} = \alpha_{k, 1, 3} ~ \alpha_{k+1, 1, 2} ~ \alpha_{k+2, 1, 3}$$
$$\iff \alpha_{0, 1, 2}^2 ~ \alpha_{0, 0, 3} = \alpha_{0, 1, 2} ~ \alpha_{0, 0, 3}^2 ~ ( \hbox{using } \eqnref{10} \hbox{ and } \alpha_{k, 1, 2} = \alpha_{0, 1, 2} ~ , ~ k \in \Z).$$
This implies: $\alpha_{0, 1, 2} = \alpha_{0, 0, 3}$.\\
For the case $r=2, ~ \mu = 0$ in $\eqnref{2}$ we have:
$$\alpha_{k, 0, 2} ~ \alpha_{k+1, 0, 3} ~ \alpha_{k+2, 0, 2} = \alpha_{k, 0, 3} ~ \alpha_{k+1, 0, 2} ~ \alpha_{k+2, 0, 3} $$
$$\iff \alpha_{0, 0, 3} = \alpha_{0, 0, 3}^2 ~ \hbox{(using \eqnref{10} and $ ~ \alpha_{k,0,2}=1, ~ \forall k \in \Z$)}.$$
So, we have $ ~ \alpha_{0, 1, 1} = \alpha_{0, 1, 2} = \alpha_{0, 0, 3} = 1$.\\
Putting $r \ge 3$ in $\eqnref{2}$ we get:
$$\alpha_{0, 0, r}^2 ~ \alpha_{0, 0, r+1} = \alpha_{0, 0, r} ~ \alpha_{0, 0, r+1}^2 ~ \hbox{(using \eqnref{10})}.$$
This implies $ ~ \alpha_{0, 0, r} = \alpha_{0, 0, r+1}, ~ r \ge 3$ and hence we have $\alpha_{k, \mu, r} = 1$ for all $k, \mu, r$.\\

Considering the case $r=2, ~ s \ge 4$ in $\eqnref{4}$ we get: $\beta_{k, \mu, 2}^2=1, ~ $ as $\beta_{k, \mu, s}^2=1, ~ s \ge 3$ (follows from $\eqnref{3}$ and $\eqnref{12}$).\\
Also note that, putting $r=1$ in $\eqnref{5}$ we have: $\beta_{k, \mu, 2}^3=1$.\\
Above two relations imply: $\beta_{k, \mu, 2}=1$.\\
Now if we put $r=2$ in $\eqnref{5}$, we get: $\beta_{k, \mu, 3}^3=1$. 
But then we have $\beta_{k, \mu, 3}^2=1$, which imply $\beta_{k, \mu, 3}=1$.\\
Similarly, using $\eqnref{5}$ iteratively we deduce: $\beta_{k, \mu, r} = 1 ~ $ for all $k, \mu, r$.\\

Hence, for $n \ge 5$, in the abelianization of $WB_n'$ the generators $\alpha_{k, \mu, r}$ and $\beta_{k, \mu, r}$ become identity and hence the abelianization of $WB_n'$ is trivial group.\\
This shows that for $n \geq 5$, $WB_n'$ is perfect. 
\end{proof}

\subsection{Proof of \thmref{mainth}  } \thmref{mainth} follows from \lemref{lem1}, \lemref{Lemma2} and \lemref{lem3}.

\subsection{Proof of the Corollaries}

\subsubsection{Proof of \corref{cor1}} Recall (see for instance \cite[Corollary 4.3]{dam}) that $WB_n$ is isomorphic to a subgroup of $Aut(F_n)$, and hence so also $WB_n'$. Since $F_n$ is residually finite, using a result of Magnus \cite{magnus} it follows that $Aut(F_n)$ is also residually finite.  Hence $WB_n'$ as a subgroup of $Aut(F_n)$ is also residually finite. It is well-known that a finitely generated residually finite group is Hopfian. Thus, using \thmref{mainth}, $WB_n'$ is Hopfian for all $n \ge 3$. 

\subsubsection{Proof of \corref{cor2}} 
	Suppose $\phi : WB_n \rightarrow F_k$ be a nontrivial homomorphism. By \thmref{mainth}, $WB_n'$ is finitely generated. Hence, $\phi (WB_n') = \phi (WB_n)'$ is finitely generated. But, $\phi (WB_n)$ is free group of finite rank. Hence, $\phi (WB_n)'$ is finitely generated only if rank of $\phi (WB_n)$ is at most 1. This proves \corref{cor2}.

\subsubsection{Proof of \corref{cor3}}  It follows from \cite{bardakov} that there is a non-trivial homomorphism from the pure welded braid group, $PWB_n$, onto a free group. Hence $PWB_n$ is not adorable. For $k=3, 4$, $WB_k/PWB_k$ is a finite solvable group. Hence, by \cite[Proposition 1.7]{rou2}, for $k=3, 4$, $WB_k$ is not adorable, in particular, $WB_k'$ is not perfect for $k=3, 4$.  This proves \corref{cor3}. 

\section{Commutators of Flat Braid Groups}\label{flat}
In this section, we prove the following theorem. 
\begin{theorem}\label{vfbn}
	$FVB_n'$ and $FWB_n'$ are finitely presented groups for all $n$.
\end{theorem}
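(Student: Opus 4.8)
The plan is to mimic the Reidemeister--Schreier computation carried out for $WB_n'$ above, the crucial simplification being that the flat relation $\sigma_i^2 = 1$ forces the abelianization to be \emph{finite}. First I would abelianize each of the two presentations. In both $FVB_n$ and $FWB_n$ the braid, symmetric and mixed relations identify all the $\sigma_i$ with a single image $\overline{\sigma}$ and all the $\rho_i$ with a single image $\overline{\rho}$; the flat relation gives $\overline{\sigma}^2 = 1$, the relation $r_3$ gives $\overline{\rho}^2 = 1$, and the two images commute. The forbidden relation $r_8$ (present only in $FWB_n$) abelianizes trivially. Hence $FVB_n^{ab} \cong FWB_n^{ab} \cong \Z_2 \times \Z_2$.

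Since each abelianization is finite, the commutator subgroup $FVB_n'$ (respectively $FWB_n'$) has index $4$ in $FVB_n$ (respectively $FWB_n$). As both ambient groups are given by finite presentations, finite presentability of the commutator subgroups is then immediate from the standard fact that a finite-index subgroup of a finitely presented group is itself finitely presented---a direct consequence of the Reidemeister--Schreier method (see \cite{mks}). This already establishes \thmref{vfbn}.

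To obtain an \emph{explicit} presentation, which is the goal of \secref{flat}, I would run Reidemeister--Schreier with the finite Schreier transversal $\Lambda = \{ 1, \sigma_1, \rho_1, \sigma_1 \rho_1 \}$ of the four cosets. This produces generators $S_{\lambda, a} = (\lambda a)(\overline{\lambda a})^{-1}$ with $\lambda \in \Lambda$ and $a \in \{ \sigma_i, \rho_i \mid 1 \le i \le n-1 \}$, so $8(n-1)$ generators, together with the finitely many relators $\tau(\lambda r_\mu \lambda^{-1})$. Because $\Lambda$ is finite, one obtains a finite presentation directly, without the infinite family of Tietze eliminations that were needed in the $WB_n'$ case. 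The only real work---and hence the main obstacle---is the bookkeeping: rewriting each conjugate $\lambda r_\mu \lambda^{-1}$ under $\tau$ while tracking how $\sigma_1^2 = 1$ enters each coset reduction, and then trimming redundant generators and relators via Tietze transformations. I would expect the forbidden relator $r_8$ to demand the most care in the $FWB_n$ case, with the $FVB_n$ computation running in parallel but with one fewer family of relations.
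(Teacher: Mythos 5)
Your proposal is correct and follows essentially the same route as the paper: both rest on the observation that the flat relation forces the abelianization to be $\Z_2\times\Z_2$, so the commutator subgroup has index $4$ and Reidemeister--Schreier with the finite transversal $\{1,\sigma_1,\rho_1,\sigma_1\rho_1\}$ yields a finite presentation. The only difference is that you observe the bare statement already follows from the standard fact that a finite-index subgroup of a finitely presented group is finitely presented, whereas the paper carries the computation through to explicit presentations (Lemmas \ref{lemma6} and \ref{lemma7}).
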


Using arguments similar to the ones used for $WB_n'$, we have the following observation as well.  
\begin{corollary}\label{cor4}
The flat virtual braid group $FVB_n$ and the flat welded braid group $FWB_n$ are adorable groups of degree 1 for $n \geq 5$. In particular, commutator subgroups of these groups are perfect for $n \geq 5$. 
\end{corollary}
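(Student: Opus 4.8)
The plan is to bypass a second Reidemeister--Schreier computation and instead deduce adorability from the quotient structure, exactly as was done for $WB_n$ in the introduction. Recall that for $n \geq 5$ the virtual braid group $VB_n$ is adorable of degree $1$, since $VB_n'$ is perfect by \cite[Proposition 8]{bb}. Now $FVB_n$ is the quotient of $VB_n$ obtained by adjoining the flat relations $\sigma_i^2 = 1$, while $FWB_n$ is the quotient of $WB_n$ obtained by the same relations; as $WB_n$ is itself a quotient of $VB_n$, both $FVB_n$ and $FWB_n$ are quotients of $VB_n$.

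First I would apply \cite[Lemma 1.1]{rou2}, which states that if $G \twoheadrightarrow H$ and $G$ is adorable then $H$ is adorable with $doa(H) \leq doa(G)$. Applied to the surjections $VB_n \twoheadrightarrow FVB_n$ and $VB_n \twoheadrightarrow FWB_n$, this gives for $n \geq 5$ that both flat groups are adorable of degree at most $1$. It then remains only to exclude degree $0$, i.e.\ to check that neither group is perfect. For this I would abelianize the defining presentations: once all generators commute and $\sigma_i^2 = \rho_i^2 = 1$, the braid, symmetric, mixed, and forbidden relations all collapse to the identifications $\sigma_i = \sigma_1$, $\rho_i = \rho_1$, so each presentation reduces to $\langle\, \sigma_1, \rho_1 \mid \sigma_1^2 = \rho_1^2 = 1,\ \sigma_1 \rho_1 = \rho_1 \sigma_1 \,\rangle \cong \Z_2 \times \Z_2$. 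Since this is nontrivial, $G/G' \neq 1$, so the degree is not $0$ and therefore equals $1$ exactly. By definition degree $1$ means $G'/G'' = 1$, which is precisely the assertion that $FVB_n'$ and $FWB_n'$ are perfect, yielding the ``in particular'' clause.

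As an alternative that parallels \lemref{lem3}, one could instead start from the explicit finite presentations of $FVB_n'$ and $FWB_n'$ produced in \thmref{vfbn}, abelianize them, and verify that every Reidemeister--Schreier generator becomes trivial. Here the appropriate Schreier transversal is the finite set $\{\sigma_1^{\delta} \rho_1^{\epsilon} : \delta, \epsilon \in \{0,1\}\}$, reflecting $|G^{ab}| = 4$; this finiteness is exactly what makes $FVB_n'$ and $FWB_n'$ finite-index, hence finitely presented. The only genuine labour in that route is recording the flat analogues of relations \eqnref{1}--\eqnref{12} and tracking the chain of Tietze eliminations until each abelianized generator is forced to die, mirroring the computation of \lemref{lem3}. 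I expect the main obstacle to be purely this bookkeeping rather than anything conceptual, since the quotient argument above already settles the corollary cleanly; the one subtle point worth flagging there is that the lemma only yields degree $\le 1$, so the nontriviality of the abelianization is essential to pin the degree down to exactly $1$.
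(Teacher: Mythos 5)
Your proposal is correct. The paper gives no explicit proof of this corollary beyond the phrase ``using arguments similar to the ones used for $WB_n'$,'' and in fact both of your routes are foreshadowed there: the quotient argument via $VB_n$ appears in the introduction as an alternative proof of perfectness of $WB_n'$, while the placement of the corollary immediately after \thmref{vfbn} suggests the authors primarily have in mind the direct route --- abelianize the explicit presentations of $FVB_n'$ and $FWB_n'$ from \lemref{lemma7} and check that all generators $a_i, b_i, c_i, f_i$ die, in parallel with \lemref{lem3}. Your primary argument (surjections $VB_n \twoheadrightarrow FVB_n \twoheadrightarrow FWB_n$, perfectness of $VB_n'$ from \cite[Proposition 8]{bb}, monotonicity of $doa$ under quotients from \cite[Lemma 1.1]{rou2}, and nontriviality of the $\Z_2 \times \Z_2$ abelianization to rule out degree $0$) is cleaner and entirely independent of the Reidemeister--Schreier computation; what it does not buy you is the explicit finite presentation, which is the actual content of \thmref{vfbn}, whereas the direct route reuses that presentation and keeps the whole section self-contained. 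You were right to flag that \cite[Lemma 1.1]{rou2} only gives $doa \leq 1$ and that the nontrivial abelianization is what pins the degree to exactly $1$; note, though, that for the ``in particular'' clause alone this care is not needed, since $doa \leq 1$ already forces $G' = G''$ in either case.
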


We shall prove \thmref{vfbn} in the rest of this section by deducing explicit finite presentations for $FVB_n'$ and $FWB_n'$ using Reidemeister-Schreier method and Tietze transformations. 

\subsection{Computing the generators:} Let $G=FVB_n$ or $FWB_n$. 
Define the map $\phi$: 
$$1 \xrightarrow {} G' \xrightarrow{} G \xrightarrow{\phi} \Z_2 \times \Z_2 \xrightarrow{} 1$$
where, for $i=1, \ldots, n-1$, $\phi(\sigma_i)=\overline{\sigma_1}$, $\phi(\rho_i)=\overline \rho_1$; here  $\overline \sigma_1$ and $\overline \rho_1$ are the generators of the two copies of $\Z_2$. We will denote $\phi(a)$, for $a \in G$, simply by $\overline a$. Note that, $\phi$ does have a section in the above short exact sequence and  $\ker \phi=G'$.\\

Here, Image($\phi$) is isomorphic to the abelianization of $G$, denoted as $G^{ab}$. To prove this, we abelianize the presentations of $FVB_n$ and $FWB_n$ by inserting the relations $ ~ xy=yx ~ $ in the presentations for all $x,y \in \{~ \sigma_i, \rho_i ~ | ~ 1 \le i \le n-1 ~ \} $.
We find that in both the cases the resulting presentation is the following:
$$ G^{ab} = ~ < \sigma_1, \rho_1 ~ | ~ \sigma_1 \rho_1 = \rho_1 \sigma_1, ~ \sigma_1^2=1, ~ \rho_1^2=1 ~ >.$$
Clearly, $G^{ab}$ is isomorphic to $\Z_2 \times \Z_2$ . But as $\phi$ is onto, Image($\phi$) =$ ~ \Z_2 \times \Z_2$.\\
Hence, Image($\phi$) is isomorphic to $G^{ab}$.

\begin{lemma}\label{lemma6}
	$G'$ is generated by $\sigma_i \sigma_1=a_i$, $\rho_i \rho_1 = b_i$, $\rho_1 \sigma_i \rho_1 \sigma_1 = c_i$, $\rho_1 \rho_i = d_i$, $\sigma_1 \sigma_i = e_i$, $\sigma_1 \rho_i \rho_1 \sigma_1 = f_i$, $\sigma_1 \rho_1 \sigma_i \rho_1 = g_i$, $\sigma_1 \rho_1 \rho_i \sigma_1 = h_i$ for $i=1, 2, \ldots, n-1$.
\end{lemma}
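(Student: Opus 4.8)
The plan is to run the Reidemeister–Schreier machinery exactly as in the computation of the generators of $WB_n'$, the only structural difference being that the flat relation forces the quotient to be finite. Since $\phi$ identifies $G/G'$ with $\Z_2 \times \Z_2$, whose two factors are generated by $\overline{\sigma_1}$ and $\overline{\rho_1}$, there are exactly four cosets, and I would take
$$\Lambda = \{1, \sigma_1, \rho_1, \sigma_1\rho_1\}$$
as a transversal. This set is prefix-closed, hence a Schreier transversal, and a quick check shows that $\phi$ restricts to a bijection $\Lambda \to \Z_2 \times \Z_2$, so it is a genuine system of coset representatives. Note the contrast with $WB_n'$: there $\sigma_1$ has infinite order and the transversal $\{\sigma_1^m\rho_1^\epsilon\}$ is infinite, whereas here $\sigma_1^2=1$ makes $\Lambda$ finite, which is ultimately why $G'$ will turn out to be finitely generated.

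By \cite[Theorem 2.7]{mks}, $G'$ is generated by the elements $S_{\lambda,a} = (\lambda a)(\overline{\lambda a})^{-1}$, as $\lambda$ ranges over $\Lambda$ and $a$ over $\{\sigma_i,\rho_i \mid 1 \le i \le n-1\}$. With four representatives and two families of generators this yields eight families, which will be matched with $a_i,\dots,h_i$. The computation of each $S_{\lambda,a}$ reduces to reading off $\phi(\lambda a) \in \Z_2 \times \Z_2$ and selecting the corresponding element of $\Lambda$ as $\overline{\lambda a}$. For example, $\phi(\rho_1\sigma_i) = \overline{\rho_1}\,\overline{\sigma_1}$ corresponds to the coset of $\sigma_1\rho_1$, so $\overline{\rho_1\sigma_i} = \sigma_1\rho_1$ and $S_{\rho_1,\sigma_i} = \rho_1\sigma_i(\sigma_1\rho_1)^{-1}$.

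The last step is pure bookkeeping: the relations $\sigma_1^2 = \rho_1^2 = 1$ give $\sigma_1^{-1}=\sigma_1$ and $\rho_1^{-1}=\rho_1$, so every $S_{\lambda,a}$ collapses to one of the listed words. Continuing the example, $S_{\rho_1,\sigma_i} = \rho_1\sigma_i\rho_1^{-1}\sigma_1^{-1} = \rho_1\sigma_i\rho_1\sigma_1 = c_i$. Carrying out all eight cases recovers $a_i=\sigma_i\sigma_1$ and $e_i=\sigma_1\sigma_i$ (from $\lambda=1,\sigma_1$ with $a=\sigma_i$), $b_i=\rho_i\rho_1$ and $d_i=\rho_1\rho_i$ (from $\lambda=1,\rho_1$ with $a=\rho_i$), $c_i$ and $g_i=\sigma_1\rho_1\sigma_i\rho_1$ (from $\lambda=\rho_1,\sigma_1\rho_1$ with $a=\sigma_i$), and $f_i=\sigma_1\rho_i\rho_1\sigma_1$ and $h_i=\sigma_1\rho_1\rho_i\sigma_1$ (from $\lambda=\sigma_1,\sigma_1\rho_1$ with $a=\rho_i$). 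I do not expect a genuine obstacle: the argument is entirely mechanical, and the only place demanding care is correctly computing the coset of each $\lambda a$, since an error there would attach the wrong tail $(\overline{\lambda a})^{-1}$ to the word and spoil the match with the stated generators.
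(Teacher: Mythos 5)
Your proposal is correct and follows exactly the paper's own argument: the same Schreier transversal $\Lambda=\{1,\sigma_1,\rho_1,\sigma_1\rho_1\}$, the same appeal to \cite[Theorem 2.7]{mks}, and the same case-by-case computation of the eight families $S_{\lambda,a}$, with the relations $\sigma_1^2=\rho_1^2=1$ used to simplify the tails $(\overline{\lambda a})^{-1}$. The sample computation of $S_{\rho_1,\sigma_i}=c_i$ is accurate and the remaining seven cases check out as you describe.
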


\begin{proof} Consider a Schreier set of coset representatives:
$$\Lambda=\{ 1, \sigma_1, \rho_1, \sigma_1 \rho_1 \}.$$
By \cite[Theorem 2.7]{mks}, the group $G'$ is generated by the set: 
$$\{S_{\lambda, a}=(\lambda a) (\overline{\lambda a})^{-1} \ | \ \lambda \in \Lambda, \ a \in \{\sigma_i, \rho_i \ | \ i=1, 2, \ldots, n-1\} \}.$$

We compute the generators as follows:
\begin{enumerate}
	\item {\it For $\lambda = 1$}: $$S_{1, \sigma_i}=\sigma_i \sigma_1 = a_i \ ,$$ $$S_{1, \rho_i}=\rho_i \rho_1 = b_i \ ;$$
	\item {\it For $\lambda = \rho_1$}: $$S_{\rho_1,\sigma_i}=\rho_1 \sigma_i \rho_1 \sigma_1 = c_i \ ,$$ $$S_{\rho_1,\rho_i}=\rho_1 \rho_i = d_i \ ;$$
	\item {\it For $\lambda = \sigma_1$}: $$S_{\sigma_1,\sigma_i}=\sigma_1 \sigma_i = e_i \ ,$$ $$S_{\sigma_1,\rho_i}=\sigma_1 \rho_i \rho_1 \sigma_1 = f_i \ ;$$
	\item {\it For $\lambda = \sigma_1 \rho_1$}: $$S_{\sigma_1 \rho_1,\sigma_i}=\sigma_1 \rho_1 \sigma_i \rho_1 = g_i \ ,$$ $$S_{\sigma_1 \rho_1,\rho_i}=\sigma_1 \rho_1 \rho_i \sigma_1 = h_i \ .$$
\end{enumerate}
This proves the lemma.
\end{proof}

\subsection{Computing the defining relations:} To obtain defining relations of $G'$, we define a re-writing process $\tau$ as before. By \cite[Theorem 2.9]{mks}, the group $G'$ is defined by the relations:
$$\tau_{\mu, \lambda}=\tau(\lambda r_{\mu} \lambda^{-1})=1, ~ \lambda \in \Lambda,$$
where $r_{\mu}$ are the defining relators of $G$.\\

\begin{lemma}\label{lemma7}
	$FVB_n'$ has the following finite presentation: \\ \\
	Set of Generators: $$c_1, c_2, f_2, a_i, b_i, \ i=2, \dots ,n-1$$
	Set of Defining Relations:
	$$a_2^3=b_2^3=c_2^3=f_2^3=1;$$
	$$a_i^2=b_i^2=(b_i c_1)^2=1, \ i=3,\dots,n-1;$$
	$$b_2^{-1} f_2 a_2^{-1}=1;$$
	$$b_2 c_1 f_2^{-1} c_2^{-1}=1;$$
	$$(a_2 a_i)^2=(b_2 b_i)^2=(c_2 a_i)^2=(f_2 b_i c_1)^2=1 , i \ge 4;$$
	$$(a_2 a_3)^3=(b_2 b_3)^3=(c_2 a_3)^3=(f_2 b_3 c_1)^3=1;$$
	$$a_2 b_i c_1=b_i c_2 , i \ge 4;$$
	$$a_i f_2=b_2 a_i , i \ge 4;$$
	$$b_2 b_3 a_2 b_3 c_1 f_2^{-1} a_3=1;$$
	$$b_2^{-1} b_3 c_2 b_3 c_1 f_2 a_3=1;$$
	$$(a_i a_j)^2 = (b_i b_j)^2 = 1, \ i,j \ge 3, \ |i-j|>1;$$
	$$(a_i a_{i+1})^3 = (b_i b_{i+1})^3 = 1, \ i \ge 3;$$
	$$b_j^{-1} a_i^{-1} b_j a_i = c_1, \ i,j \ge 3, \ |i-j|>1;$$
	$$b_i b_{i+1} a_i = a_{i+1} b_i b_{i+1}, \ i \ge 3.$$\\
	The group $FWB_n'$ is generated by the same set of generators as above and has a set of defining relations as the set of relations above along with the following relations: 
	
	$$a_2^{-1} c_2 c_1^{-1} b_2^{-1}=1;$$
	$$a_2 c_2^{-1} c_1 f_2^{-1}=1;$$
	$$a_2 a_3 b_2 a_3 c_2^{-1} b_3=1;$$
	$$a_2^{-1} a_3 f_2 a_3 c_2 b_3 c_1=1;$$
	$$a_2 b_i c_1=b_i c_2 , \  i \ge 4.$$
	
\end{lemma}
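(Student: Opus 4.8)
The plan is to run the Reidemeister--Schreier machine of \cite{mks} with the finite Schreier transversal $\Lambda = \{1, \sigma_1, \rho_1, \sigma_1\rho_1\}$ already fixed in \lemref{lemma6}, and then to trim the resulting presentation by Tietze transformations, exactly as was done for $WB_n'$. The essential simplification over that computation is that here $[G:G'] = |\Z_2\times\Z_2| = 4$ is finite, so $\Lambda$ is finite; rewriting each of the finitely many conjugates $\lambda r_\mu \lambda^{-1}$ (for $\lambda\in\Lambda$ and $r_\mu$ a defining relator of $G$) therefore already produces only finitely many relations. Thus a finite presentation of $G'$ is automatic, and the content of \thmref{vfbn} is to exhibit the explicit trimmed form. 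I would carry out the computation once for $G = FVB_n$, whose relators are $r_1,\dots,r_7$ together with the flat relators $\sigma_i^2$, and then adjoin the single forbidden relator $r_8$ to obtain the $FWB_n$ case; this matches the shape of the lemma, in which $FWB_n'$ has the same generators as $FVB_n'$ plus extra relations.

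The first round of Tietze transformations uses the two families of involution relators, which are present precisely because we are in the flat setting. Rewriting $\sigma_i^2$ over $\Lambda$ gives $a_i e_i = 1$ and $c_i g_i = 1$, and rewriting $\rho_i^2$ gives $b_i d_i = 1$ and $f_i h_i = 1$; this lets me discard the four ``upper transversal'' families by setting $e_i = a_i^{-1}$, $g_i = c_i^{-1}$, $d_i = b_i^{-1}$, $h_i = f_i^{-1}$ for all $i$. The index-one generators collapse directly from the definitions in \lemref{lemma6}: $a_1 = b_1 = f_1 = 1$ and $g_1 = c_1^{-1}$, while $c_1 = \rho_1\sigma_1\rho_1\sigma_1$ survives. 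After this pass the generating set is $c_1$ together with $a_i, b_i, c_i, f_i$ for $2 \le i \le n-1$.

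The second round eliminates the remaining $c$ and $f$ generators of index at least $3$. Here the mixed commuting relator $r_6$ (which, modulo the flat relations, encodes $\sigma_i\rho_j=\rho_j\sigma_i$ for $|i-j|>1$) with one index equal to $1$ does the work: rewriting $\sigma_i\rho_1\sigma_i^{-1}\rho_1^{-1}$ yields $c_i = a_i$, and rewriting $\sigma_1\rho_i\sigma_1^{-1}\rho_i^{-1}$ yields $f_i = b_i c_1$, for all $i\ge3$. Substituting these everywhere leaves exactly the generators $c_1, c_2, f_2, a_i, b_i$ with $2\le i\le n-1$, and it remains to read off the relations. The pure braid and symmetric relators $r_2, r_5$ at the bottom index give the order-three relations $a_2^3 = b_2^3 = c_2^3 = f_2^3 = 1$, while $r_1, r_4$ give the squares $a_i^2 = b_i^2 = 1$, the far-commuting $(a_ia_j)^2 = (b_ib_j)^2 = 1$, and the adjacent $(a_ia_{i+1})^3 = (b_ib_{i+1})^3 = 1$ at higher indices. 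The mixed relators $r_6, r_7$ supply the coupling relations: the low-index links $f_2 = b_2a_2$ and $c_2 = b_2c_1f_2^{-1}$, the commutator identity $b_j^{-1}a_i^{-1}b_ja_i = c_1$ for far-apart $i,j\ge3$, and the two boundary relations in $a_3, b_3$. Collecting these gives the stated presentation of $FVB_n'$, and re-running the four rewrites of $r_8$ over $\Lambda$, followed by the same substitutions, yields the five additional relations listed for $FWB_n'$.

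I expect the main obstacle to be bookkeeping rather than any single hard idea. First, one must separate the low-index cases $i\in\{1,2,3\}$, where braid and mixed relators overlap to produce the special order-three and boundary relations, from the generic range $i\ge4$, where the neighbours are far enough apart to give clean commuting relations --- this is exactly why several relations are stated only for $i\ge4$. Second, one must fix a consistent elimination order so that, once $e_i,g_i,d_i,h_i$ and then $c_i,f_i$ ($i\ge3$) have been substituted out, no later relator reintroduces them; this is a genuine risk because the mixed and forbidden relators couple all four families at once. Verifying that the substitutions $c_i=a_i$ and $f_i=b_ic_1$ turn each rewritten relator into precisely one of the listed relations --- for instance that $(f_2b_ic_1)^2=1$ is simply the far-commuting relation for the $f$-family once one recognises $b_ic_1=f_i$ --- is the step where the accounting must be carried out with care.
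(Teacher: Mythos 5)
Your proposal follows essentially the same route as the paper: Reidemeister--Schreier over the finite transversal $\Lambda=\{1,\sigma_1,\rho_1,\sigma_1\rho_1\}$, elimination of $e_i,d_i,g_i,h_i$ via the involution relators, then elimination of $c_i,f_i$ for $i\ge 3$ via the far-commuting mixed relation $a_if_j=b_jc_i$ specialised at index $1$, followed by the same case split on low versus generic indices. The observation that finiteness of the presentation is automatic from the finite index, and the care about elimination order, match the paper's computation; the only discrepancies are cosmetic (you use the $WB_n$ relator numbering rather than the renumbering in Section 3).
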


\begin{proof}
The defining relations for $FVB_n$ are:
$$r_1 = \sigma_i \sigma_j \sigma_i \sigma_j = 1, \ |i-j|>1,$$
$$r_2 = \sigma_i \sigma_{i+1} \sigma_i \sigma_{i+1} \sigma_i \sigma_{i+1}=1,$$
$$r_3 = \sigma_i^2=1,$$
$$r_4 = \rho_i^2=1,$$
$$r_5 = \rho_i \rho_j \rho_i \rho_j = 1, \ |i-j|>1,$$
$$r_6 = \rho_i \rho_{i+1} \rho_i \rho_{i+1} \rho_i \rho_{i+1} = 1,$$
$$r_7 = \sigma_i \rho_j \sigma_i \rho_j = 1, \ |i-j|>1,$$
$$r_8 = \rho_i \rho_{i+1} \sigma_i \rho_{i+1} \rho_i \sigma_{i+1} = 1.$$\\
There is one extra defining relation for $FWB_n$:
$$r_9 = \rho_i \sigma_{i+1} \sigma_i \rho_{i+1} \sigma_i \sigma_{i+1} = 1.$$\\
Now we rewrite the conjugates (by elements of $\Lambda$) of each of the defining relators of the above presentations of $FVB_n$ and $FWB_n$ in order to get the defining relations for the commutator subgroups, i.e. $FVB_n'$ and $FWB_n'$.\\

To start with, consider the first defining relation: $r_1 = \sigma_i \sigma_j \sigma_i \sigma_j = 1, \ |i-j|>1$.\\
We conjugate this relation by each element $\lambda \in \Lambda=\{ 1, \sigma_1, \rho_1, \sigma_1 \rho_1 \}$ and rewrite them as follows:

\begin{enumerate}
	\item { For $\lambda = 1$}: $1 =\tau(r_1)=\tau (\sigma_i \sigma_j \sigma_i \sigma_j)$ \\ 
	$ = S_{1, \sigma_i}S_{\sigma_1, \sigma_j}S_{1, \sigma_i}S_{\sigma_1, \sigma_j} = (a_i e_j)^2, ~ |i-j|>1;$\\
	
	\item { For $\lambda = \sigma_1$}: $1 =\tau(\sigma_1 r_1 \sigma_1)= \tau (\sigma_1 \sigma_i \sigma_j \sigma_i \sigma_j \sigma_1) $ \\ 
	$ = S_{1, \sigma_1}S_{\sigma_1, \sigma_i}S_{1, \sigma_j}S_{\sigma_1, \sigma_i}S_{1, \sigma_j} S_{\sigma_1, \sigma_1} = (e_i a_j)^2, ~ |i-j|>1;$\\
	
	\item { For $\lambda = \rho_1$}: $1 = \tau(\rho_1 r_1 \rho_1) = \tau (\rho_1 \sigma_i \sigma_j \sigma_i \sigma_j \rho_1)$ \\
	$ = S_{1, \rho_1}S_{\rho_1, \sigma_i}S_{\sigma_1 \rho_1, \sigma_j}S_{\rho_1, \sigma_i}S_{\sigma_1 \rho_1, \sigma_j} S_{\rho_1, \rho_1} = (c_i g_j)^2, ~ |i-j|>1;$\\
	\item { For $\lambda = \sigma_1 \rho_1$}: $1 = \tau(\sigma_1 \rho_1 r_1 \rho_1 \sigma_1) = \tau (\sigma_1 \rho_1 \sigma_i \sigma_j \sigma_i \sigma_j \rho_1 \sigma_1)$ \\
	$ = S_{1, \sigma_1}S_{\sigma_1, \rho_1}S_{\sigma_1 \rho_1, \sigma_i}S_{\rho_1, \sigma_j}S_{\sigma_1 \rho_1, \sigma_i}S_{\rho_1, \sigma_j} S_{\sigma_1 \rho_1, \rho_1} S_{\sigma_1, \sigma_1} = (g_i c_j)^2, ~ |i-j|>1.$\\
\end{enumerate}

In this way, we get some of the defining relations for $FVB_n'$ and $FWB_n' ~ $, namely \\ $(a_i e_j)^2=1, ~ |i-j|>1$ and $(c_i g_j)^2=1, ~ |i-j|>1$.\\

In a similar manner we rewrite the conjugates of other defining relators i.e. $r_2, r_3, \dots r_8$, and deduce the remaining defining relations for $FVB_n'$:\\
$$(a_i e_{i+1})^3=1,~ (e_i a_{i+1})^3=1,~ a_i e_i=1;$$
$$(b_i d_{i+1})^3=1, ~ (d_i b_{i+1})^3=1,~ b_i d_i=1;$$
$$(f_i h_{i+1})^3=1, ~ (h_i f_{i+1})^3=1,~ f_i h_i=1;$$
$$(c_i g_{i+1})^3=1, ~ (g_i c_{i+1})^3=1, ~ c_i g_i=1;$$
$$(b_i d_j)^2=1, ~ (f_i h_j)^2=1, ~ |i-j|>1;$$
$$a_i f_j g_i d_j=1, ~ e_i b_j c_i h_j=1, ~ |i-j|>1;$$
$$b_i d_{i+1} a_i f_{i+1} h_i e_{i+1}=1;$$
$$f_i h_{i+1} e_i b_{i+1} d_i a_{i+1}=1;$$
$$d_i b_{i+1} c_i h_{i+1} f_i g_{i+1}=1;$$
$$h_i f_{i+1} g_i d_{i+1} b_i c_{i+1}=1.$$\\
For $FWB_n'$ we have these extra relations (rewriting conjugates of $r_9$):
$$b_i c_{i+1} g_i d_{i+1} a_i e_{i+1}=1;$$
$$f_i g_{i+1} c_i h_{i+1} e_i a_{i+1}=1;$$
$$d_i a_{i+1} e_i b_{i+1} c_i g_{i+1}=1;$$
$$h_i e_{i+1} a_i f_{i+1} g_i c_{i+1}=1.$$\\
Now we use Tietze transformations to remove some of the generators.\\

Replacing $e_i,d_i,g_i,h_i$ by $a_i^{-1},b_i^{-1},c_i^{-1},f_i^{-1}$ respectively, we get the defining relations for $FVB_n'$ in terms of the generators $a_i,b_i,c_i,f_i$ as follows:\\
$$(a_i a_j^{-1})^2=1, \ |i-j|>1;$$
$$(b_i b_j^{-1})^2=1, \ |i-j|>1;$$
$$(c_i c_j^{-1})^2=1, \ |i-j|>1;$$
$$(f_i f_j^{-1})^2=1, \ |i-j|>1;$$\\
$$(a_i a_{i+1}^{-1})^3=1;$$
$$(b_i b_{i+1}^{-1})^3=1;$$
$$(c_i c_{i+1}^{-1})^3=1;$$
$$(f_i f_{i+1}^{-1})^3=1;$$\\
$$a_i f_j = b_j c_i, \ |i-j|>1;$$
$$a_1=b_1=f_1=1;$$\\
$$b_i b_{i+1}^{-1} a_i f_{i+1} f_i^{-1} a_{i+1}^{-1}=1;$$
$$b_i^{-1} b_{i+1} c_i f_{i+1}^{-1} f_i c_{i+1}^{-1}=1.$$\\
For $FWB_n'$ we have two extra defining relations:
$$a_i a_{i+1}^{-1} b_i c_{i+1} c_i^{-1} b_{i+1}^{-1}=1;$$
$$a_i^{-1} a_{i+1} f_i c_{i+1}^{-1} c_i f_{i+1}^{-1}=1.$$\\
Observe that, putting $i=1$ in the first 8 relations above, we get the following:
$$a_j^2=b_j^2=c_j^2=f_j^2=1, \ j=3,\dots,n-1,$$ $$a_2^3=b_2^3=c_2^3=f_2^3=1. $$\\
Note that, if $|i-j|>1$, we have $a_i f_j = b_j c_i$. Putting $j=1$, we get $c_i=a_i$ for $i=3,\dots,n-1$. And putting $i=1$, we get $f_j=b_j c_1$ for $j=3,\dots,n-1$.\\
We replace $c_i$ by $a_i$ and $f_i$ by $b_i c_1$ for $i=3,\dots,n-1$.\\

Putting $i=1$ in the next 2 relations, we have:
$$b_2^{-1} f_2 a_2^{-1}=1;$$
$$b_2 c_1 f_2^{-1} c_2^{-1}=1.$$\\
Putting $i=1$ in the two extra relations of $FWB_n'$, we have:
$$a_2^{-1} c_2 c_1^{-1} b_2^{-1}=1;$$
$$a_2 c_2^{-1} c_1 f_2^{-1}=1.$$\\
Similarly, considering the cases $i=2, j\ge4$, and, $i\ge4, j=2$, in the above relations we get the following set of relations for $FVB_n'$:
$$(a_2 a_i)^2=(b_2 b_i)^2=(c_2 a_i)^2=(f_2 b_i c_1)^2=1 , ~~i \ge 4;$$
$$(a_2 a_3)^3=(b_2 b_3)^3=(c_2 a_3)^3=(f_2 b_3 c_1)^3=1;$$
$$a_2 b_i c_1=b_i c_2 , ~~ i \ge 4;$$
$$a_i f_2=b_2 a_i ,~~ i \ge 4;$$
$$b_2 b_3 a_2 b_3 c_1 f_2^{-1} a_3=1;$$
$$b_2^{-1} b_3 c_2 b_3 c_1 f_2 a_3=1.$$\\
And, the two extra relations for $FWB_n'$:
$$a_2 a_3 b_2 a_3 c_2^{-1} b_3=1;$$
$$a_2^{-1} a_3 f_2 a_3 c_2 b_3 c_1=1.$$
Lastly, we consider the case $i,j \ge 3$:
$$(a_i a_j)^2 = (b_i b_j)^2 = 1, \ i,j \ge 3, \ |i-j|>1;$$
$$(a_i a_{i+1})^3 = (b_i b_{i+1})^3 = 1, \ i \ge 3;$$
$$b_j^{-1} a_i^{-1} b_j a_i = c_1, \ i,j \ge 3, \ |i-j|>1;$$
$$b_i b_{i+1} a_i = a_{i+1} b_i b_{i+1}, \ i \ge 3.$$\\
And, the two extra relations for $FWB_n'$:
$$a_i a_{i+1} b_i = b_{i+1} a_i a_{i+1}, \ i \ge 3;$$
$$c_1 a_i a_{i+1} b_i c_1 = b_{i+1} a_i a_{i+1}, \ i \ge 3.$$
This completes the proof of the lemma.
\end{proof}

Proof of \thmref{vfbn} follows from \lemref{lemma6} and \lemref{lemma7}. \\

In particular, for $n=3$, we have the following presentations: 
$$FVB_3'= \ < a,b,x,y \ |\ a^3=b^3=(ab)^3=(xy)^3=1,\ y^{-1}=axb>$$ 
$$FWB_3'= < a,b,c,x \ | \ a^3=b^3=c^3=1, abc=1, axc=bax=xcb >.$$

\begin{ack}
We gratefully acknowledge useful discussions with Valeriy Bardakov. We are thankful to Matt Zaremsky for his comments on this work and for letting us know about his work. We are thankful to Celeste Damiani for her suggestions after thoroughly going through this work. Finally, it is a pleasure to thank the referee for useful comments.
\end{ack}

\end{document}